\definecolor{black}{rgb}{0.0, 0.0, 0.0}
\definecolor{red}{rgb}{1.0, 0.5, 0.5}
\newcommand{\margnote}[1]{
\ifthenelse{\boolean{shownotes}}%
{\marginpar{\raggedright\tiny\texttt{#1}}}%
{}%
}
\newcommand{\hole}[1]{
\ifthenelse{\boolean{shownotes}}%
{\begin{center} \fbox{ \rule {.25cm}{0cm} \rule[-.1cm]{0cm}{.4cm}
\parbox{.85\textwidth}{\begin{center} \texttt{#1}\end{center}} \rule
{.25cm}{0cm}}\end{center}} {} }
\title[The derivation of Swarming models]{The derivation of Swarming models: Mean-Field Limit and Wasserstein distances}
\author[Carrillo]{Jos\'e Antonio Carrillo}
\address[Jos\'e Antonio Carrillo]{\newline Department of Mathematics \newline
Imperial College London, London SW7 2AZ, United Kingdom}
\email{carrillo@imperial.ac.uk}
\author[Choi]{Young-Pil Choi}
\address[Young-Pil Choi]{\newline Department of Mathematics \newline
Imperial College London, London SW7 2AZ, United Kingdom}
\email{young-pil.choi@imperial.ac.uk}
\author[Hauray]{Maxime Hauray}
\address[Maxime Hauray]{\newline Centre de Math\'ematiques et Informatique, \newline
    Universit\'e d'Aix-Marseille, Technop\^ole Ch\^ateau-Gombert, Marseille, France}
\email{maxime.hauray@univ-amu.fr}
\numberwithin{equation}{section}
\newtheorem{theorem}{Theorem}[section]
\newtheorem{corollary}{Corollary}[section]
\newtheorem{proposition}{Proposition}[section]
\newtheorem{remark}{Remark}[section]
\newtheorem{definition}{Definition}[section]
\newcommand{\bbr}{\mathbb R}
\newcommand{\R}{\mathbb R}
\newcommand{\bbp} {\mathbb P}
\newcommand{\PP} {\mathbb P}
\newcommand{\EE} {\mathbb E}
\newcommand{\N}{\mathbb N}
\newcommand{\e}{\varepsilon}
\newcommand{\ep}{\varepsilon}
\newcommand{\Pc}{\mathcal{P}_c}
\def\charf {\mbox{{\text 1}\kern-.30em {\text l}}}
\def\lip{\mathrm{Lip}}
\def\grad{\nabla}
\DeclareMathOperator{\supp}{supp}
\DeclareMathOperator{\dv}{div}
\begin{document}
\allowdisplaybreaks



\thanks{\textbf{Acknowledgments.}
JAC was partially supported by the project MTM2011-27739-C04-02
DGI (Spain) and 2009-SGR-345 from AGAUR-Generalitat de Catalunya.
JAC acknowledges support from the Royal Society by a Wolfson
Research Merit Award. YPC was supported by Basic Science Research
Program through the National Research Foundation of Korea funded
by the Ministry of Education, Science and Technology (ref.
2012R1A6A3A03039496). JAC and YPC were supported by Engineering
and Physical Sciences Research Council grants with references
EP/K008404/1 (individual grant) and EP/I019111/1 (platform grant).}

\begin{abstract}
    These notes are devoted to a summary on the mean-field limit
    of large ensembles of interacting particles with applications
    in swarming models. We first make a summary of the kinetic
    models derived as continuum versions of second order models
    for swarming. We focus on the question of passing from the
    discrete to the continuum model in the Dobrushin framework. We
    show how to use related techniques from fluid mechanics
    equations applied to first order models for swarming, also
    called the aggregation equation. We give qualitative bounds on
    the approximation of initial data by particles to obtain the
    mean-field limit for radial singular (at the origin) potentials
    up to the Newtonian singularity. We also show the propagation
    of chaos for more restricted set of singular potentials.
    \end{abstract}

\maketitle 

\tableofcontents

%
%
%
%
\section{Introduction}
In the last years, we have seen the development of a great deal of
different models in the biology, applied mathematics, and physics
literature to describe the collective behavior of individuals.
Here, individuals may mean animals (insects, fish, birds,...),
bacteria, and even robots. Most of these models involve the
nonlocal character of the interaction as a basic modelling pillar,
see for instance \cite{camazine,couzin,LLE,vicek}. In fact, one of
largest source of collective behavior models comes from control
engineering. There, the aim is to produce a suitable control of
the movement of small squads of robots in order to perform
unmanned vehicle operations, for instance \cite{pegoel}. Even,
these ideas have been proposed to model crowd motion, including
more ``intelligent'' particles deciding their movement based on
optimization of certain quantities: time to exit from a room or a
stadium, for instance \cite{BMP}.

Either in social or in biological sciences, these models encounter
many interesting features such as the spontaneous formation of
different pattern behaviors. When we talk about patterns, we do
not mean static patterns like in the study of crystals but rather
dynamic patterns leading to the collective motion of the
individual ensemble. For instance, two of the main collective
motion patterns studied in different models are the flock and the
milling behavior, see \cite{DCBC,CDP,review2,CKMT,CPM}. In the
flock pattern, individuals achieve a consensus on the direction or
orientation towards some objective, producing as a consequence a
particular spatial shape showing their preferred comfort
structure. This kind of swiftly moving flocks have been reported
in many species although the most spectacular or bucolic ones are
the bird flocks, starlings for instance. In the mill pattern,
individuals arrange into a kind of vortex like motion around some
point. This particular moving pattern has been observed in fish
schools. Hundreds of movies can be easily accessed through
internet search showing them.

There are many reasons one can argue, why such a large number of
individuals react to external stimuli producing these macroscopic
patterns without seemingly the presence of a leader in the swarm.
Hydrodynamic enhancement, predators avoidance, social
interactions, spawning survival rate, and many others have been
proposed to explain this behavior in different species, see
\cite{parrish}.

One of the main question in describing this behavior by
mathematical models is how to include the interaction between
individuals. In any case, there is a consensus that the modelling
starts from particle-like models as in statistical physics. These
particle models are also called Individual Based Models (IBMs) in
the community. They are usually formed by a set of differential
equations of Newton type (called 2nd order models) or by kinematic
equations where the inertia terms are neglected (called first
order models). Essentially, by admitting that the inertia term is
negligible, we assume that individuals can adjust to the velocity
field instantaneously, an approximation valid when their speed is
not too large. In any case, these first order models were proposed
in the literature derived in a phenomenological manner
\cite{mogilner,Mogilner2,parrish,TB04,Top06,EVL}. The literature
on first and second order models for swarming has increased
exponentially fast in the last few years. Many of these models
find also their origin in social sciences, where consensus or
opinion formation was also described in similar grounds. Another
typical ingredient in these models is some kind of noise leading
to systems of SDEs. In this work, we will not discuss how to
incorporate noise in these models, we refer to \cite{BCC} and the
references therein.

Most of these models are based on discrete approaches
incorporating certain effects that we like to call the ``first
principles'' of swarming. These first principles are based on
modelling the ``sociological behavior'' of animals with very
simple rules such as the social tendency to produce grouping
(attraction/aggregation), the inherent minimal space they need to
move without problems and feel comfortably inside the group
(repulsion/collisional avoidance) and the mimetic adaptation or
synchronization to a group (orientation/alignment). Even if these
minimal models contain very basic rules, the patterns observed in
their simulation and their complex asymptotic behavior are already
very challenging from the mathematical viewpoint. The 3-zone
models including attraction, repulsion, and alignment effects are
classical in fish modelling \cite{Ao,HW} for instance. Based on
them, one can incorporate may other effects to render more
realistic the outputs of the simulations and the models, see
\cite{BTTYB} for fish schools or \cite{HH} for birds flocks. We
also refer to the reader to the recent review \cite{review} about
the kinetic modelling of swarming.

To the eyes of a kinetic theorist or a statistical physicist,
studying such systems of ODEs when the number of individuals get
large is doomed to failure. Dynamical system approaches are quite
useful but they typically have huge problems to describe large
systems of particles. A classical approach to attack the problem
is to pass to a continuous description of the system. This means
to go from particle descriptions to kinetic descriptions where the
unknown is the particle density distribution in position-velocity
(phase) space for 2nd order models or in position space for 1st
order models.

Going from particle to continuum descriptions is one of the most
classical problems in kinetic theory. It is at the basis of the
derivation of the mother and father kinetic equations, namely: the
Vlasov and the Boltzmann equations. A rigorous derivation of the
Boltzmann equation from the Newtonian dynamics has only been given
for short times (of the order of the average time of first
collision), see \cite{Lan} \cite{GST}. In that case, interactions
between the particles are modelled by short-range potentials
leading to collision kernels. The question of the derivation of
the Boltzmann equation from particles with jump processes was also
raised and solved by \cite{Kac}, and further results are given in
the recent important work by \cite{M-M}. The derivation of the
Vlasov equation is well understood only for regular or not too
singular potentials \cite{BH,neunzert2,dobru,Hau-Ja}. In fact, a
full derivation of the Vlasov-Poisson system in 3D is also
lacking. The problem of passing to the limit from particle to
continuum models like the Vlasov equation is called the mean-field
limit. This name just comes from the fact that the resulting
equation is a kind of averaged version of the interaction between
the large number of individuals. Moreover, the resulting equation
gives the typical behavior of one isolated individual among all
the others since they are assumed to be completely
indistinguishable.

Finally, there are other famous mean-field limit equations, such
as the Euler and the Navier-Stokes equations for incompressible
fluids, see \cite{MP,MB}. It has been extensively used for
numerical purposes that both equations in the 2D incompressible
case can be derived from particle approximations, called vortex
point approximations. The convergence in the viscous case has been
rigorously proved for very general initial data \cite{Os,FHM}. In
the non-viscous case \cite{Sch} proves that particle
approximations converge towards solutions of the Euler equation,
but they may not converge to the good solution because of the lack
of uniqueness in the Euler equation, see \cite{DLS}. However, in
the case where the initial particles are equally spaced on a grid
to approximate a smooth solution of the Euler equation, the
convergence was shown in \cite{GHL}. These vortex methods have
been proven to be convergent and estimates of the error committed
have been obtained in recent works using optimal transport
techniques \cite{Hau} but not for the real Euler equation in 2D.

The aim of this work is to show in detail a particular example of
the mean field limit in the case of first order models not covered
in the previous literature. Nevertheless, we will first discuss
some of these issues for 2nd order models summarizing results in
\cite{CCR,BCC}. We will also discuss that the spatial shape of the
main patterns: flock and mills, are given by stationary solutions
of the 1st order models. This gives another reason from a more
conceptual mathematical viewpoint of reducing to 1st order models.
Section \ref{sec-mean-field} will be devoted to obtain the mean
field limit to the so-called aggregation equation for singular
potentials recovering some of the models studied in
\cite{B-C-L,B-L-R}. Here, the idea is to assume that we have
solutions of the model in better functional spaces due to the
singularity of the potential, but we have to pay in terms of
conditions on the initial distribution of particles (how they are
distributed) in such a way that the particle solution converges to
the continuum solution of the aggregation equation as
$N\to\infty$. We will make use of similar arguments to \cite{Hau}
to show the mean-field limit for first order swarming models with
singular potentials up to the Newtonian singularity. In Section
\ref{sec-exist}, we study a local existence of a unique
$L^p$-solution for the aggregation equation. This complements the
well-posedness theory in \cite{B-L-R}. Finally, Section
\ref{sec-propa} is devoted to show the propagation of chaos
property for the aggregation equation. This property is very
important from the physical relevance of the kinetic and
aggregation models, since it states that one can derive the
mean-field equations under quite generic randomly generated
initial location of the particles. We are only able to show it for
a more restricted set of singular potentials with respect to the
mean-field limit.

%
%
%
%
\section{The Dobrushin approach}
\subsection{Some Individual Based Models} As we described in
the introduction, the modelling in swarming starts by introducing
some particle models, IBMs in the jargon of this community,
incorporating some of the basic effects: repulsion, attraction,
and alignment. Let us discuss briefly some of these models,
starting with the ones that have recently attracted more attention
due to their simplicity while having a rich mathematical structure
and pattern formation. One of these models was introduced by the
UCLA group in \cite{DCBC} and it consists in Newton's like
equations where all the effect of repulsion and attraction is
encoded via a pairwise potential $W:\R^d\to \R$. A
popular choice for the interaction potential $W$ is the Morse
potential given by
\begin{equation}\label{morse-po}
W(x)= -C_A e^{-|x| /\ell_A} +
C_R e^{-|x| /\ell_R},
\end{equation}
where $C_A, C_R$ and $\ell_A, \ell_R$ are the strengths and the
typical lengths of attraction and repulsion, respectively. They
are chosen for having biologically reasonable potentials with
$C=C_R/C_A>1$ and $\ell_R/\ell_A<1$, see \cite{CPM} for other nice
choices of the interaction potentials and a deeper discussion on
the issue of biologically relevant interaction potentials. Apart
from this, the other effect included is the tendency of the
particles to travel asymptotically at a fixed speed as in
\cite{LR}. Consequently, a term producing a balance between
self-propulsion and friction is introduced imposing an asymptotic
speed to the particles (if other effects are ignored), but it does
not influence the orientation vector. The resulting ODE system
reads as:
\begin{equation*}
  \left\lbrace
    \begin{array}{ll}
      \displaystyle \frac{dx_i}{dt} = {v}_i,
      &\qquad (i = 1,\dots,N),
      \vspace{.3cm}
      \\
      \displaystyle \frac{dv_i}{dt} = (\alpha - \beta \,|{v}_i|^2) {v}_i
      - \frac1N \sum_{j \neq i } \nabla W (|x_i - x_j|),
      &\qquad (i = 1,\dots,N).
    \end{array}
  \right.
\end{equation*}
where $\alpha$, $\beta$ are nonnegative parameters, determining
the asymptotic speed of particles given by $\sqrt{\alpha/\beta}$.
Here, the potential has been scaled depending on the mass of each
particle as in \cite{CDP} and in such a way that the effect of the
potential per particle diminishes while the energy is of constant
order as the number of particles $N$ diverges. This scaling is the
so-called mean-field scaling, see the introduction of \cite{BV}
for a nice discussion of the different scalings in first order
models.

Another popular IBM including only the alignment effect is the
so-called \cite{CS2} model. Each individual in the swarm changes
its velocity vector based on the other individuals by
adjusting/averaging their relative velocity with all the others.
This averaging is weighted in such a way that closer individuals
have more influence than further ones. For a system with $N$
individuals the Cucker-Smale model reads as
$$
\left\{\begin{array}{lr} \displaystyle\frac{dx_i}{dt} = v_i, \\[3mm]
\displaystyle\frac{dv_i}{dt} = \frac1N \displaystyle\sum_{j=1}^{N}
w_{ij} \left(v_j - v_i\right) ,
\end{array}\right.
$$
with the \emph{communication rate} $w(x)$ given by:
$$
w_{ij} = w(x_i-x_j)= \frac 1 {\left(1 +|x_i - x_j|^2
  \right)^\gamma}\,,
$$
for some $\gamma \geq 0$.

Associated to the above models, one can formally write the
expected Vlasov-like kinetic equations as $N\to\infty$, see for
instance \cite{CDP}, leading to
\begin{equation}
  \label{eq:swarming}
  \partial_t f + v \cdot \grad_x f
  - (\grad W * \rho) \cdot \grad_v f
  + \dv_v((\alpha - \beta |v|^2) v f)
  = 0,
\end{equation}
where $\rho$ represents the macroscopic \emph{density} of $f$:
\begin{equation*}
  \rho(t,x) := \int_{\R^d} f(t,x,v) \,dv
  \quad \text{ for } t \geq 0, \quad x \in \R^d.
\end{equation*}
The Cucker-Smale particle model leads to the following kinetic
equation:
\begin{equation}\label{eq:CS}
\frac{\partial f}{\partial t} + v\cdot \nabla_x f = \nabla_v \cdot
\left[\xi[f] \,f\right],
\end{equation}
where $\xi[f](x,v,t) = \left( H \ast f \right)(x,v,t)$, with
$H(x,v)=w(x)v$ and $\ast$ standing for the convolution in both
position and velocity ($x$ and $v$). We refer to
\cite{CS2,HT08,HL08,cfrt} for further discussion about this model
and qualitative properties.

Moreover, quite general models incorporating the three effects
previously discussed with additional ingredients, such as vision
cones or topological interactions, have been considered in
\cite{review,LLE,Agueh,Albi,Hasko}. In particular in \cite{LLE},
they consider that the $N$ individuals follow the system:
\begin{equation}\label{eq:lle}
\left\lbrace
\begin{array}{l}
\displaystyle \frac{dx_i}{dt} = {v}_i, \vspace{.3cm}\\
\displaystyle \frac{dv_i}{dt} = F^A_i + F^I_i,
\end{array}
\right.
\end{equation}
where $F^A_i$ is the self-propulsion generated by the
$i$th-individual, while $F^I_i$ is due to interaction with the
others. The interaction with other individuals can be generally
modeled as:
$$
F^I_i=F^{I,x}_i + F^{I,v}_i = \sum_{j=1}^N
g_\pm(|x_i-x_j|)\frac{x_j-x_i}{|x_i-x_j|} + \sum_{j=1}^N
h_\pm(|v_i-v_j|)\frac{v_j-v_i}{|v_i-v_j|} .
$$
Here, $g_+$ and $h_+$ ($g_-$ and $h_-$) are chosen when the
influence comes from the front (behind), i.e., if $(x_j-x_i)\cdot
v_i> 0$ ($<0$); choosing $g_+ \neq g_-$ and $h_+ \neq h_-$ means
that the forces from particles in front and those from particles
behind are different. The sign of the functions $g_\pm(r)$ encodes
the short-range repulsion and long-range attraction for particles
in front of (+) and behind (-) the $i$th-particle. Similarly,
$h_+>0$ ($<0$) implies that the velocity-dependent force makes the
velocity of particle $i$ get closer to (away from) that of
particle $j$.

Some of these models, for instance \cite{Agueh,Albi,Hasko},
include sharp boundaries for the vision cone or for the
interaction with the nearest neighbors. As we shall see later,
these are typical situations in which the mean-field limit for
general measures will not work. By sharp boundaries we mean that
the functions involved in the kernels such as $w(x)$, $g_\pm$, or
$h_\pm$ are given by characteristic functions on sets depending on
the location/velocity of the agent.

\subsection{Basic tools in transport distances} In this
subsection, we present several definitions of Wasserstein
distances and their properties.
\begin{definition}(Wasserstein p-distance) \label{defdp}
Let $\rho_1,~ \rho_2$ be two Borel probability measures on
$\bbr^d$. Then the Euclidean Wasserstein distance of order $1\leq
p<\infty$ between $\rho_1$ and $\rho_2$ is defined as
\begin{equation*}
d_p(\rho_1,\rho_2) := \inf_{\gamma} \left( \int_{\bbr^d \times
\bbr^d} |x-y|^p \, d\gamma(x,y) \right)^{1/p},
\end{equation*}
and, for $p=\infty$ (this is the limiting case, as $p \to
\infty$),
\begin{equation*}
d_{\infty}(\rho_1,\rho_2) := \inf_{\gamma} \left( \gamma -
\sup_{(x,y) \in \bbr^d \times \bbr^d} | x- y | \right),
\end{equation*}
where the infimum runs over all transference plans, i.e., all
probability measures $\gamma$ on $\bbr^d \times \bbr^d$ with
marginals $\rho_1$ and $\rho_2$ respectively,
\[
\int_{\bbr^d \times \bbr^d} \phi(x) d\gamma(x,y) = \int_{\bbr^d}
\phi(x) \rho_1(x) dx,
\]
and
\[
\int_{\bbr^d \times \bbr^d} \phi(y) d\gamma(x,y) = \int_{\bbr^d}
\phi(y) \rho_2(y) dy,
\]
for all $\phi \in \mathcal{C}_b(\bbr^d)$.
\end{definition}

We also remind the definition of the push-forward of a measure by
a mapping in order to give the relation between Wasserstein
distances and optimal transportation.

\begin{definition}
Let $\rho_1$ be a Borel measure on $\bbr^d$ and $\mathcal{T} :
\bbr^d \to \bbr^d$ be a measurable mapping. Then the push-forward
of $\rho_1$ by $\mathcal{T}$ is the measure $\rho_2$ defined by
\[
\rho_2(B) = \rho_1(\mathcal{T}^{-1}(B)) \quad \mbox{for} \quad B
\subset \bbr^d,
\]
and denoted as $\rho_2 = \mathcal{T} \# \rho_1$.
\end{definition}

The set of probability measures with bounded moments of order $p$,
denoted by $\mathcal{P}_p(\bbr^d)$, $1\leq p< \infty$, is a
complete metric space endowed with the $p$-Wassertein distance
$d_p$, see \cite{Vil}. We refer to \cite{GS,MR2219334} for more
details in the case of the $d_\infty$ distance.

\begin{remark}
The definition of $\rho_2 = \mathcal{T} \# \rho_1$ is equivalent
to
$$
\int_{\bbr^d} \phi(x)\, d\rho_2(x) = \int_{\bbr^d} \phi(\mathcal
T(x))\, d\rho_1(x)\,,
$$
for all $\phi\in \mathcal{C}_b(\R^d)$. Given a probability measure with
bounded $p$-th moment $\rho_0$, consider two measurable mappings
$X_1,X_2 : \bbr^d \to \bbr^d$, then the following inequality
holds.
\[
d_p^p(X_1 \# \rho_0, X_2 \# \rho_0) \leq \int_{\bbr^d \times
\bbr^d} |x-y|^p d\gamma(x,y) = \int_{\bbr^d} | X_1(x) - X_2(x)|^p
d\rho_0(x).
\]
Here, we used as transference plan $\gamma = (X_1 \times
X_2)\#\rho_0$ in Definition \ref{defdp}.
\end{remark}

\subsection{A quick review of the classical Dobrushin result}

Under smoothness assumptions on the ingredient functions of the
swarming models, one can use adaptations of the classical result
of \cite{dobru} to obtain what is called the mean-field limit
equation for general particle approximations of any initial
measure. These arguments are classical in kinetic theory and were
also introduced in \cite{BH,neunzert2}, making use of the bounded
Lipschitz distance, and reviewed in
\cite{spohn2,villani-champmoyen}, see also \cite{Szn,Mel96} for
the case with noise. The bounded Lipschitz distance or dual
$W^{1,\infty}$-norm is equivalent to the Wasserstein distance
$d_1$ for compactly supported measures. This strategy works as
soon as the velocity field defining the characteristics of the
model is a bounded and globally Lipschitz function whose
dependence on the measure itself is Lipschitz continuous in the
$d_1$ sense. These ideas were improved to allow for locally
Lipschitz velocity fields for compactly supported initial measures
in \cite{CCR} and for suitable decay conditions at infinity and
with noise in \cite{BCC}. With these techniques one can include
quite general kinetic models for swarming in this well-posedness
theory.

Let us introduce some notation for this section: $\mathcal A =
\Pc(\R^d\times\R^d)$ denotes the subset of
$\mathcal{P}(\R^d\times\R^d)$ consisting of measures of compact
support in $\R^d\times\R^d$.  On the other hand, we consider the
set of functions $\mathcal{ B}:=\lip_{loc} (\R^d\times\R^d)$,
which in particular are locally Lipschitz with respect to $(x,v)$.
$B_R$ will denote the ball centered at 0 of radius $R$ in $\R
\times \R$.

Let us consider general operators from  measures to  vector
fields, $\mathcal{ H}[\cdot]:\mathcal{A}\to \mathcal{B}$,
satisfying the following hypotheses: for any $R_0 > 0$ and $f, g
\in \mathcal{A}$ such that $\supp f \cup \supp g \subseteq
B_{R_0}$, there exists some ball $B_R \subset \R^d \times \R^d$
and a constant $C = C(R,R_0)>0$, such that
  \begin{gather}\label{hyp:general-model}
    \|\mathcal{H}[f]-\mathcal{H}[g]\|_{L^\infty(B_R)}
    \leq  C \, d_1(f,g),
    \\
    \lip_R(\mathcal{H}[f])
    \leq C, \qquad \|\mathcal{H}[f]\|_{L^\infty(B_R)} \le C
.\label{hyp:general-model2}
  \end{gather}
Here, $\lip_R(\cdot)$ denotes the Lipschitz constant of a function
in $B_R$.

Given $f \in \mathcal{C}([0,T], \Pc(B_{R_0}))$,
and for any initial condition $(X^0, V^0) \in \R^d \times \R^d$,
the following system of ordinary differential equations has a
unique locally defined solution thanks to
conditions~\eqref{hyp:general-model}
\begin{subequations}
  \begin{align}
    \label{eq:characteristicsX-general}
    \frac{d}{dt} X &= V,   & X(0) = X^0
    \\
    \label{eq:characteristicsV-general}
    \frac{d}{dt} V &= \mathcal{H}[f(t)](X,V) ,   & V(0) = V^0.
  \end{align}
\end{subequations}
We will additionally require that the solutions to that system are global.
Of course, this is a requirement that has to be checked for every
particular model. We prefer to give a general condition which
reduces the problem of existence and stability to the simpler one
of existence of the ODEs. Under the above conditions, the
 existence and uniqueness of associated transport equation
 \begin{equation}
  \label{eq:general-model}
  \partial_t f + v \cdot \grad_x f
  - \nabla_v \cdot [ \mathcal{H}[f] f ]
  = 0.
\end{equation}
 was obtained in \cite{CCR} to which we refer for full details. In \cite{CCR}, the interactions $\mathcal{
H}[f]=(\alpha-\beta|v|^2)v-\nabla W * \rho$ and $\mathcal{H}[f]=H
*f$ corresponding to \eqref{eq:swarming} and \eqref{eq:CS},
respectively, and
$$
  \mathcal{H}[f]=F_A(x,v)+G(x) * \rho + H(x,v) * f,
$$
with $F_A$, $G$ and $H$ given functions satisfying suitable
hypotheses, such that the kinetic equation
\eqref{eq:general-model} corresponds to the model \eqref{eq:lle} are investigated.
\medskip

\begin{theorem}
  \label{thm:Existence-general}
  Given an operator $\mathcal{H}[\cdot]:\mathcal{A}\to \mathcal{B}$
  satisfying Hypotheses \eqref{hyp:general-model} and
  \eqref{hyp:general-model2} for which the characteristics \eqref{eq:characteristicsX-general}-\eqref{eq:characteristicsV-general}
 are globally well-defined, and $f_0$ a measure on $\R^d \times \R^d$
  with compact support. There exists a solution $f$ on $[0,+\infty)$
  to equation \eqref{eq:general-model} with initial condition
  $f_0$. In addition,
  \begin{equation}
    \label{eq:f-continuous-general}
    f \in \mathcal{C}([0,+\infty); \Pc(\R^d\times \R^d))
  \end{equation}
  and there is some increasing function $R = R(T)$ such that for all $T>0$,
  \begin{equation}
    \label{eq:supp-f-general}
    \supp f_t \subseteq B_{R(T)} \subseteq \R^d \times \R^d
    \quad \text{ for all } t \in [0,T].
  \end{equation}
  This solution is unique among the family of solutions satisfying
  \eqref{eq:f-continuous-general} and \eqref{eq:supp-f-general}.
  Moreover, given any other initial data $g_0\in \Pc(\R^d\times\R^d)$ and $g$
  its corresponding solution, then there exists a
  strictly increasing function $r(t):[0,\infty)\to \R^+_0$
  with $r(0)=1$ depending only on $\mathcal{H}$ and the size of the support of
  $f_0$ and $g_0$, such that
  \begin{equation*}
    d_1(f_t, g_t)
    \leq
    r(t)\, d_1(f_0, g_0),
    \quad
    t \geq 0.
  \end{equation*}
\end{theorem}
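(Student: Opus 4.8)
The plan is to establish this via the method of characteristics coupled with a Wasserstein-stability (Dobrushin-type) estimate. The key object is the characteristic flow $\Phi_t$ associated to the velocity field along $f$: for a fixed continuous curve $f \in \mathcal{C}([0,T],\Pc(B_{R_0}))$, the system \eqref{eq:characteristicsX-general}--\eqref{eq:characteristicsV-general} defines a flow on $\R^d \times \R^d$, and the solution of the transport equation \eqref{eq:general-model} should be $f_t = \Phi_t \# f_0$. The whole construction will be set up as a fixed point: given a candidate curve $f$, solve the ODEs to build the flow, push forward $f_0$, and check that the resulting curve matches $f$. So first I would fix the functional setting --- the complete metric space $\mathcal{C}([0,T],\Pc(B_{R(T)}))$ with the sup-in-time $d_1$ metric --- and show that the push-forward map is a contraction (or at least admits a fixed point via a Banach-type argument on small time intervals, then glued together).

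The core estimate, and what I expect to be the engine of everything, is the stability inequality. Suppose $f$ and $g$ are two curves with flows $\Phi_t$ and $\Psi_t$ respectively, both supported in a common ball $B_R$ for $t \in [0,T]$; I would control $d_1(\Phi_t \# f_0, \Psi_t \# g_0)$. Using the transference-plan bound from the Remark, choosing the plan induced by $(\Phi_t \times \Psi_t)\# \gamma_0$ where $\gamma_0$ is optimal between $f_0$ and $g_0$, one reduces to estimating $\int |\Phi_t(z) - \Psi_t(z')| \, d\gamma_0(z,z')$ along trajectories. Differentiating in time and splitting the difference of the two vector fields as
\begin{equation*}
\mathcal{H}[f(t)](\Phi_t) - \mathcal{H}[g(t)](\Psi_t)
= \big(\mathcal{H}[f(t)](\Phi_t) - \mathcal{H}[f(t)](\Psi_t)\big)
+ \big(\mathcal{H}[f(t)](\Psi_t) - \mathcal{H}[g(t)](\Psi_t)\big),
\end{equation*}
the first term is handled by the local Lipschitz bound $\lip_R(\mathcal{H}[f]) \le C$ from \eqref{hyp:general-model2}, giving a factor $|\Phi_t - \Psi_t|$, while the second is exactly the stability hypothesis \eqref{hyp:general-model}, bounded by $C\, d_1(f(t),g(t))$. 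Combining with the trivial $\dot X$-equation and applying Gr\"onwall yields a bound of the form $d_1(\Phi_t\#f_0, \Psi_t\#g_0) \le e^{Ct}\big(d_1(f_0,g_0) + C\int_0^t d_1(f(s),g(s))\,ds\big)$.

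From this single estimate both existence and the final stability claim follow. For \emph{existence}, I would set $g_0 = f_0$ and iterate the push-forward map; the Gr\"onwall estimate shows the iterates form a Cauchy sequence in $\mathcal{C}([0,T],\Pc)$, at least on a short interval whose length depends only on the constants in the hypotheses. The \emph{a priori} support bound \eqref{eq:supp-f-general} --- which is where global-in-time well-posedness of the characteristics is used --- guarantees all iterates stay inside a fixed ball $B_{R(T)}$, so the constant $C = C(R,R_0)$ is uniform and the interval can be extended to all of $[0,T]$, then to $[0,\infty)$. The limit is the desired solution, and \eqref{eq:f-continuous-general} follows from uniform continuity of the flow. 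For \emph{uniqueness and the final inequality}, I would apply the estimate to two genuine solutions $f,g$ (so that $f(t)=\Phi_t\# f_0$ and $g(t) = \Psi_t\# g_0$ with the curves equal to their own push-forwards), turning the estimate into a closed Gr\"onwall inequality for $u(t):=d_1(f_t,g_t)$, namely $u(t) \le e^{Ct}\big(u(0) + C\int_0^t u(s)\,ds\big)$; a second application of Gr\"onwall then produces the increasing function $r(t)$ with $r(0)=1$ and $d_1(f_t,g_t) \le r(t)\,d_1(f_0,g_0)$, with $u(0)=0$ giving uniqueness.

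The step I expect to be the genuine obstacle is \textbf{controlling the support uniformly and making the constant $C(R,R_0)$ legitimately depend only on the data}. The hypotheses \eqref{hyp:general-model}--\eqref{hyp:general-model2} are only \emph{local} (valid on balls), so the stability estimate is circular unless one first pins down the radius $R(T)$ independently of the solution; this is precisely why the statement assumes the characteristics are globally well-defined, and the careful bookkeeping to propagate a common support ball for \emph{both} flows simultaneously --- before the Gr\"onwall argument can even be invoked --- is the delicate part. This is the technical heart deferred to \cite{CCR}, and I would follow that treatment for the support propagation while presenting the Gr\"onwall stability estimate in full.
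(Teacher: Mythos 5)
The paper does not prove this theorem itself: it states it as a summary of results from \cite{CCR}, and the surrounding text describes exactly the classical Dobrushin strategy (characteristics, push-forward fixed point, $d_1$-stability via a Gr\"onwall estimate combining the local Lipschitz bound with the $d_1$-continuity hypothesis \eqref{hyp:general-model}) that your proposal reconstructs. Your outline, including the identification of the uniform support-radius bookkeeping as the delicate point deferred to \cite{CCR}, is correct and follows essentially the same route as the cited proof.
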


\

The stability theorem \ref{thm:Existence-general} gives in
particular a rigorous derivation of the kinetic equation
\eqref{eq:general-model} from the large particle limit of the
system of ordinary differential equations. This is the exact
statement of the mean-field limit for general measures as initial
data. Let us consider the system of ordinary differential
equations:
  \begin{subequations}
    \label{eq:swarming-odes}
    \begin{align}
      &\dot{x_i} = v_i,
      \quad &i=1,\dots,N,
      \\
      &\dot{v_i} = \sum_{j \neq i} m_j \mathcal{H}[f^N(t)](x_i,v_i),
      \quad &i=1,\dots,N.
    \end{align}
  \end{subequations}
where $m_1,\dots,m_N\geq 0$ and $\sum_i m_i=1$ and $f^N$ is
defined next. Under the conditions of Theorem
\ref{thm:Existence-general}, we first notice that if
$x_i,v_i:[0,T] \to \R^d$, for $i = 1,\dots,N$, are a solution to
the system \eqref{eq:swarming-odes}, then the function $f^N:[0,T]
\to \Pc(\R^d\times\R^d)$ given by
  \begin{equation}
    \label{eq:ode-equivalent-measure}
    f^N_t := \sum_{i=1}^N m_i\, \delta_{(x_i(t), v_i(t))}
  \end{equation}
is the solution to \eqref{eq:general-model} with initial condition
  \begin{equation}
    \label{eq:ode-equivalent-measure-initial}
    f^N_0 = \sum_{i=1}^N m_i\, \delta_{(x_i(0), v_i(0))}.
  \end{equation}
In fact, the solution \eqref{eq:ode-equivalent-measure} is called
the empirical measure associated to the system of ODEs
\eqref{eq:swarming-odes}. We finally write the full statement of
the mean-field limit in the Dobrushin strategy.

\begin{corollary}  \label{cor:N-particle}
  Given $f_0\in \Pc(\R^d\times\R^d)$ and $\mathcal{H}[f]$ satisfying the conditions of Theorem
  \ref{thm:Existence-general}, take a sequence of $f_0^N$ of measures
  of the form \eqref{eq:ode-equivalent-measure-initial} (with $m_i$,
  $x_i(0)$ and $v_i(0)$ possibly varying with $N$), in such a way that
  $$
  \lim_{N\to \infty} d_1(f_0^N, f_0) = 0.
  $$
  Consider $f^N_t$ the empirical measure associated to the solution of the system
  \eqref{eq:swarming-odes} with initial conditions $x_i(0)$,
  $v_i(0)$. Then,
  \begin{equation}\label{meanfield}
  \lim_{N\to \infty} d_1(f_t^N, f_t) = 0 ,
  \end{equation}
  for all $t\geq 0$, where $f = f(t,x,v)$ is the unique measure
  solution to eq. \eqref{eq:general-model} with initial data $f_0$.
\end{corollary}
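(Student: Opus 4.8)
The plan is to derive Corollary~\ref{cor:N-particle} as a direct consequence of the stability estimate already established in Theorem~\ref{thm:Existence-general}. The key observation, which the excerpt has set up carefully, is that the empirical measure $f^N_t$ defined in \eqref{eq:ode-equivalent-measure} is itself a genuine measure-valued solution of the kinetic equation \eqref{eq:general-model}, with initial datum $f^N_0$. Thus both $f^N_t$ and the target solution $f_t$ are solutions of the \emph{same} evolution equation, differing only in their initial data. This reduces the entire mean-field limit to invoking the quantitative $d_1$-stability of solutions with respect to their initial conditions.

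First I would verify that the hypotheses of Theorem~\ref{thm:Existence-general} genuinely apply to the empirical measures. The measures $f^N_0$ are compactly supported (being finite convex combinations of Dirac masses at the points $(x_i(0),v_i(0))$), so each $f^N_0 \in \Pc(\R^d\times\R^d)$, and likewise $f_0 \in \Pc(\R^d\times\R^d)$ by assumption. One subtle point to address is \emph{uniformity in $N$}: the stability function $r(t)$ in Theorem~\ref{thm:Existence-general} depends on the size of the support of the two initial data. Since $d_1(f_0^N,f_0)\to 0$, the supports of $f_0^N$ are eventually contained in a fixed ball (a bounded enlargement of $\supp f_0$), so one can choose a single $R_0$ and hence a single increasing function $r(t)$ valid for all large $N$ simultaneously. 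I would make this uniformity explicit, since without it the limit could fail to be controlled.

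Then the argument is essentially one line. Applying the stability estimate of Theorem~\ref{thm:Existence-general} with the pair of initial data $f_0$ and $g_0 = f_0^N$, and observing that the corresponding solutions are $f_t$ and $g_t = f^N_t$ respectively (using that the empirical measure solves \eqref{eq:general-model}), we obtain
\begin{equation*}
  d_1(f_t^N, f_t) \leq r(t)\, d_1(f_0^N, f_0),
  \qquad t \geq 0,
\end{equation*}
with $r(t)$ independent of $N$ for $N$ large. Since $r(t)$ is finite for each fixed $t$ and $d_1(f_0^N, f_0) \to 0$ by hypothesis, the right-hand side tends to $0$ as $N\to\infty$, which is exactly the claimed convergence \eqref{meanfield} for every $t\geq 0$.

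I do not expect any serious obstacle here, as this corollary is designed to be a clean packaging of the stability theorem. The only point requiring genuine care is the uniform-in-$N$ control of the support described above, together with confirming that the empirical measure is admitted as a solution in the precise class for which uniqueness and stability were proved in Theorem~\ref{thm:Existence-general}, namely solutions satisfying \eqref{eq:f-continuous-general} and \eqref{eq:supp-f-general}. The latter follows because $f^N_t$ stays compactly supported along the globally well-defined characteristics and depends continuously on time, so both structural conditions hold. Once these bookkeeping matters are settled, the conclusion is immediate from the stability estimate.
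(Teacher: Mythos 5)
Your overall route is exactly the paper's: the corollary is stated there with no separate proof precisely because, once one observes that the empirical measure $f^N_t$ from \eqref{eq:ode-equivalent-measure} is itself a solution of \eqref{eq:general-model} with datum $f^N_0$, the convergence \eqref{meanfield} is an immediate application of the $d_1$-stability estimate of Theorem \ref{thm:Existence-general}. So the skeleton of your argument is correct and is the intended one.

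There is, however, one flawed step in the part where you try to be more careful than the paper. You claim that since $d_1(f_0^N,f_0)\to 0$, the supports of the $f_0^N$ are eventually contained in a fixed ball. This is false: convergence in $d_1$ gives no control on supports. For instance, placing a mass $m_N=1/N$ at distance $\sqrt{N}$ from $\supp f_0$ and distributing the remaining mass $1-1/N$ as a good approximation of $f_0$ yields $d_1(f_0^N,f_0)\le C/\sqrt{N}\to 0$ while $\supp f_0^N$ escapes every fixed ball. Since $r(t)$ in Theorem \ref{thm:Existence-general} depends on the size of the supports of both initial data, this is exactly the point where your argument needs a hypothesis rather than a deduction: one must either assume that the $f_0^N$ have uniformly bounded supports (which is implicit in the corollary, and automatic for the natural constructions of particle approximations, e.g.\ placing all particles inside a fixed neighbourhood of $\supp f_0$), or restrict to such approximating sequences. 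With that assumption made explicit, a single $R_0$ and hence a single $r(t)$ works for all large $N$, and the rest of your one-line conclusion $d_1(f^N_t,f_t)\le r(t)\,d_1(f^N_0,f_0)\to 0$ is correct. Everything else (compact support and time-continuity of $f^N_t$, so that it lies in the uniqueness class \eqref{eq:f-continuous-general}--\eqref{eq:supp-f-general}) is handled properly.
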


This section can be directly applied to the models recently
introduced in \cite{Agueh} to account for vision cones and
braking/acceleration of individuals and those in \cite{Albi,Hasko}
to include topological (nearest neighbours) interactions once the
parameter functions are smoothed out to avoid sharp boundaries.

\subsection{First-order models: Aggregation Equation}

Summarizing the previous subsection, under suitable smoothness of
the parameters involved in the swarming models, the empirical
measures are solutions themselves of the Vlasov-like kinetic
equation \eqref{eq:general-model}. Thus, an stability result in
$d_1$ with respect to the initial data is enough to conclude the
mean-field limit. Let us consider one of the particular examples
in subsection 2.1, the model introduced in \cite{DCBC} with the
Morse potential \eqref{morse-po}. This potential does not satisfy the smoothness assumption in Theorem \ref{thm:Existence-general}. In principle, one cannot
expect to have a mean-field result for general measures as initial
data and for general approximations by particles. In fact, we do
not have a well-posedness theory for such initial data in those
cases. However, one can develop well-posedness theories in better
functional spaces, say $L^1\cap L^p(\R^d\times\R^d)$ for the
initial data and then impose suitable conditions to the
distribution of the approximated particles initially to be able to
conclude the mean-field limit \eqref{meanfield}. This is the
strategy that have been followed in \cite{Hau-Ja} for the
classical Vlasov equation and in \cite{Hau} for Euler-like
equations in fluid mechanics.

In the next sections, the objective is to show this strategy
applied to a simpler swarming model than the ones showed above. We
will showcase these tools in the case of the so-called aggregation
equation. Let us assume that we have just particles interacting
through the pairwise potential $W(x)$. Assuming that the
variations of the velocity and speed are much smaller than spatial
variations, see \cite{Mogilner2}, then one can neglect the inertia
term in Newton's equation to deduce that
$$
\frac{d X_i}{dt} = - \sum_{j \neq i } \nabla W (X_i - X_j) \mbox{
in the $N\to\infty$ limit} \Rrightarrow
\begin{cases}
\displaystyle\frac{\partial\rho}{\partial t}+\text{div} \left(\rho u\right)=0  \\
u=-\nabla W*\rho
\end{cases}.
$$
Another reason to study this first order equation is that the
stationary states of the first order model determine the spatial
shape of the flock solutions to the second order models, see
\cite{CPM}. Let us remark that one could apply the Dobrushin
strategy to the aggregation equation for $\mathcal{C}^2(\R^d)$ smooth
potential with at most quadratic growth at infinity by following
the same argument as in Theorem \eqref{thm:Existence-general}.
This argument was detailed in a nice summer school notes in
\cite{Gol}.

%
%
%
%

\section{Mean-Field Limit for the Aggregation
Equation}\label{sec-mean-field}

Now, we analyse the mean-field limit of the first order model for
swarming introduced in the previous section. More precisely, we
will study sufficient conditions on the initial distribution of
particles for the convergence of a particle system towards the
aggregation equation. This model consists of the continuity
equation for the probability density of individuals $\rho(x,t)$ at
position $x \in \bbr^d$ and time $t
>0$ given by:
\begin{equation}\label{conti-agg}
\left\{ \begin{array}{ll}
\partial_t \rho + \nabla \cdot (\rho u) = 0, & \qquad t > 0, \quad x \in
\bbr^d,\\[2mm]
u(t,x) := - \nabla W * \rho, & \qquad t > 0, \quad x \in
\bbr^d,\\[2mm]
\rho(0,x) := \rho^0(x), & \qquad x \in \bbr^d,
\end{array} \right.
\end{equation}
where $u(x,t)$ is velocity field non-locally computed in terms of
the density of individuals.

As an approximation by particles of the aggregation equation
\eqref{conti-agg}, we consider the following ODE system:
\begin{equation}\label{disconti-agg}
\left\{ \begin{array}{ll}
\displaystyle \dot{X}_i(t) = -\sum_{j \neq i}m_j \nabla W (X_i(t) - X_j(t)), & \\
X_i(0)= X_i^0, \qquad i =1,\dots,N. &
\end{array} \right.
\end{equation}
Here, $\{X_i\}_{i=1}^{N}$ and $\{m_i\}_{i=1}^{N}$ are the
positions and weights of $i$-th particles, respectively. We define the
associated empirical distribution $\mu_N(t)$ as
\begin{equation}\label{dirac-sol-dis}
\mu_N(t) = \sum_{i=1}^{N}m_i \delta_{X_i(t)}, \quad \sum_{i=1}^N
m_i = \int_{\bbr^d} \rho_0(x) dx=1,
\end{equation}
with $m_i > 0$, $i=1,\dots,N$. As long as two particles (or more)
do not collide, and if we set $\nabla W (0)= 0$ (arbitrarily if
there is a singularity), then $\mu_N$ satisfies \eqref{conti-agg}
in the sense of distributions, i.e., $\mu_N(t)$ and $\rho(t)$
satisfy the same equation. In this framework, the convergence:
\begin{center}
\textit{``$\mu_N^0 \rightharpoonup \rho^0$ weakly-$*$ as measures
$\Longrightarrow$ $\mu_N(t) \rightharpoonup \rho(t)$ weakly-$*$ as measures\\
for small time or for every time?''}
\end{center}
is a natural question. If the answer is yes, we say that the
continuity equation \eqref{conti-agg} is the mean-field limit of
the particle approximation \eqref{disconti-agg}. In other words,
we can say that the continuum nonlocal equation \eqref{conti-agg}
has been rigorously derived from particle systems.

Because of the singularity in the interaction force, the natural
transport distance to use is the one induced by the
$d_\infty$-topology. Remark that this distance also allows to
understand linearized stability of particle systems around
singular steady state measures with a ring shape in first order
aggregation models, see \cite{BCLR2,KSUB}. Actually, a local
perturbation of the dynamical system \eqref{disconti-agg} keeping
the number of particles fixed is obtained by transporting the
particle to other locations nearby. One could even allow for
splitting of the mass into different particles, but all of them
located in a local neighborhood of the unperturbed particle
positions. Certainly, sending a small portion of mass very far
away from the location of one particle is not a
$d_\infty$-perturbation of the atomic measure but it is a $d_p$
small perturbation for all $1\leq p<\infty$. These ideas have also
recently been used in \cite{BCLR2} to study local minimizers of
the energy functional associated to \eqref{conti-agg}.

Another issue to cope with is that we are dealing with particle
systems whose characteristics may lead to collisions in finite
time. Therefore, we will be able to obtain meaningful results only
on intervals in which collisions are avoided (although in some
particular cases we can allow collisions).

We next introduce several notations that are used throughout the
rest of this work to compare the distance between a solution
$\rho(t)$ of the continuum aggregation equation \eqref{conti-agg}
and the empirical measure $\mu_N(t)$ defined
by~\eqref{dirac-sol-dis} associated to a solution
$\{X_i\}_{i=1}^N$ of the particle system~\eqref{disconti-agg}. The
main two quantities appearing in this comparison are the
$d_\infty$-distance between $\rho(t)$ and $\mu_N(t)$, and the
minimum inter-particle distance:
\begin{equation} \label{notat-simp}
\eta(t) := d_{\infty}(\mu_N(t),\rho(t)), \quad \eta_m(t) :=
\min_{1 \leq i \neq j \leq N} \left(|X_i(t) - X_j(t)|\right),
\end{equation}
with $\eta^0 := \eta(0)$ and $\eta_m^0 := \eta_m(0)$. Our strategy
does not take advantage, as we do not know how, of the repulsive
or attractive character of the potentials, being the proof equal
for both cases.

A theory of well-posedness for measure solutions have been
obtained for the aggregation equation \eqref{conti-agg} allowing
collision of particles in finite time in \cite{CDFLS,CDFLS2}. In
these works, the potential is assumed to be smooth except at the
origin, where the allowed singularity cannot be worse that
Lipschitz and the potential has to be $\lambda$-convex, see
\cite{CDFLS} for details. This convexity allows for attractive at
the origin potentials, but not repulsive, with local behaviors of
the form $|x|^b$ with $1\leq b<2$. In these works, the essential
tools that allow to get the mean-field limit for more singular
potentials that quadratic are based on gradient flows in the
Wasserstein distance $d_2$ sense as in \cite{AGS}. The additional
dissipation in the system of the natural Liapunov functional given
by the total interaction energy is crucial to get the mean field
limit for general measures for a potential behaving locally at $0$
like $W(x)\simeq |x|$, for instance for the attractive Morse
potential $W(x)=1-e^{-|x|}$.

In this work, we want to allow for more singular potentials at the
origin as in \cite{B-C-L,B-L-R}, and thus we need to work with
solutions in better functional spaces. More precisely, we will
work with solutions of the aggregation equation \eqref{conti-agg}
in $L^{\infty}(0,T;(L^{1} \cap L^{p})(\bbr^d))$ with $1\leq p \leq
\infty$ to be determined depending on the singularity of the
potential. We will use the notation
\begin{equation*} 
\|\rho\|_{(L^1 \cap L^p)(\R^d)} := \|\rho\|_{1} + \|\rho\|_{p},
\quad \|\rho\| := \|\rho\|_{L^{\infty}(0,T;(L^{1} \cap
L^{p})(\R^d))}\,,
\end{equation*}
where $\|\rho\|_p$ denotes the $L^p(\bbr^d)$-norm of $\rho$,
$1\leq p\leq \infty$.

In order to make sense of solutions to \eqref{conti-agg}, we need
the following assumptions on the interaction potential: we first
fix $W(0) = 0$ by definition, even if $W$ is singular at the
origin, and
\begin{equation}\label{main-assum-1}
|\nabla W(x)|\leq \frac{C}{|x|^{\alpha}}, \quad \mbox{and} \quad
|D^2 W(x)| \leq \frac{C}{|x|^{1+\alpha}}, \quad \forall ~ x \in
\bbr^d \backslash \{ 0\} \,,
\end{equation}
for $-1\leq\alpha<d-1$. Note that due to the assumptions on $W$,
we can always find $1<p<\infty$ such that $(\alpha + 1)p^{\prime}
< d$, and thus $\nabla W$ belongs to
$\mathcal{W}^{1,p'}_{loc}(\bbr^d)$.

Our results also apply with minor modifications for interaction
potentials of the form $W := W_1 + W_2$, with $W_1$ satisfying
assumptions \eqref{main-assum-1}, and $\nabla W_2$ being a global
Lipschitz function, or even more general satisfying a one-sided
Lipschitz (or convexity) condition $y \cdot D^2 W_2(x) y \le C
|y|^2$ for all $y \in \bbr^d$. This last generalization is
important because it is satisfied if $W_2 = c |x|^a, (0\leq a \leq
2)$ with $c$ positive. So that any repulsive-attractive potential
$W$, see \cite{BCLR,BCLR2} for a definition, such that $W(x)\simeq
-|x|^b/b$ locally at $x$ near the origin, satisfies assumptions
\eqref{main-assum-1} locally with $\alpha=1-b$. Therefore, our
mean-field limit results apply to locally repulsive potentials
with exponent range $2-d<b<a\leq2$ and without much restriction on
the attractive part at $+\infty$, i.e., $a>0$. We will discuss
further on localizing assumptions \eqref{main-assum-1} at the end
of this section. Finally, we cannot apply our techniques to the
Newtonian singularity \cite{BLL} being the limiting case of our
strategy as it was the case for the Euler-like models in fluid
mechanics studied in \cite{Hau}.

We next summarize the results on the existence and uniqueness of
solutions to the aggregation equation \eqref{conti-agg}. For the
local well-posedness of solutions to equation \eqref{conti-agg},
we refer to \cite{BL,B-C-L,B-L-R,L}. In particular, unique
solutions for the system \eqref{conti-agg} were obtained in
\cite{B-L-R} with second moment bounded initial data. More
precisely, Bertozzi et al. \cite[Theorem 1.1]{B-L-R} showed that
if $\nabla W \in \mathcal{W}^{1,p^{\prime}}(\R^d)$ and $\rho^0 \in
L^p(\bbr^d) \cap \mathcal{P}_2(\bbr^d)$, then there exists $~T^* >
0$ and a unique nonnegative solution to~\eqref{conti-agg}
satisfying
\[
\rho \in \mathcal{C}([0,T^*],(L^{1} \cap L^{p})(\bbr^d)) \cap \mathcal{C}^1([0,T^*],
\mathcal{W}^{-1,p}(\bbr^d)).
\]
Unfortunately, one can not directly apply those results for
potentials satisfying assumptions \eqref{main-assum-1}. We will
compliment the results in \cite{B-L-R} to show the local existence of
a unique solution to the system \eqref{conti-agg} with the
interaction potential function $W$ satisfying \eqref{main-assum-1}
in Section \ref{sec-exist}. We prefer to postpone the
well-posedness theory in order to emphasize the mean-field limit
result contained in the following theorem, whose proof follows the
strategy in \cite{Hau}.

\begin{theorem}\label{main-thm}
Suppose the kernel $W$ satisfies \eqref{main-assum-1}, and let
$\rho$ be a solution to the system \eqref{conti-agg} up to time
$T>0$, such that $\rho\in L^{\infty}(0,T;(L^{1}\cap
L^{p})(\bbr^d))\cap \mathcal{C}([0,T],\mathcal{P}_1(\R^d))$, with
initial data $\rho^0\in (\mathcal{P}_{1} \cap L^{p})(\bbr^d)$,
$0\leq\alpha<-1+d/p'$, and $1<p\leq \infty$. Furthermore, we
assume $\mu_N^0$ converges to $\rho^0$ for the distance $d_\infty$
as the number of particles $N$ goes to infinity, i.e.,
\[
d_{\infty}(\mu_N^0,\rho^0) \to 0 \quad \mbox{as} \quad N \to \infty,
\]
and that the initial quantities $\eta^0,~\eta_m^0$ satisfy
\begin{equation}\label{ini-assum}
\lim_{N \to \infty} \frac{(\eta^0)^{d/p^{\prime}}}{(\eta^0_m)^{1 + \alpha}} = 0.
\end{equation}
Then, for $N$ large enough the particle system
\eqref{disconti-agg} is well-defined up to time $T$, in the sense
that there is no collision between particles before that time, and
moreover
\[
\mu_N(t) \rightharpoonup \rho(t) \quad \mbox{weakly-$*$ as
measures as} \quad N \to \infty, \quad \mbox{for all} \quad t \in
[0,T].
\]
\end{theorem}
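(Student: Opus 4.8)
The plan is to run a continuity (bootstrap) argument that controls \emph{simultaneously} the two quantities in \eqref{notat-simp}: the distance $\eta(t)=d_\infty(\mu_N(t),\rho(t))$ and the minimal inter-particle distance $\eta_m(t)$. The guiding principle is that ``no collision up to time $T$'' is equivalent to keeping $\eta_m(t)>0$ on $[0,T]$, while the mean-field convergence $\mu_N(t)\rightharpoonup\rho(t)$ follows from $\eta(t)\to0$, since $d_\infty$-convergence forces weak-$*$ convergence. Both measures are transported by their own velocity fields: $\rho$ by $u=-\nabla W*\rho$, and the particles by the discrete field $u_N(X_i)=-(\nabla W*\mu_N)(X_i)$ (the self term vanishes since $\nabla W(0)=0$). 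First I would differentiate along the optimal $d_\infty$ plan between $\mu_N(t)$ and $\rho(t)$ to obtain the basic inequality
\[
\frac{d}{dt}\eta(t)\leq \lip_{\supp}(u)\,\eta(t)+\|u_N-u\|_{L^\infty(\supp\mu_N)},
\]
together with a parallel lower bound for $\eta_m$ driven by the effective Lipschitz constant of the discrete field $u_N$ near the particles.

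The analytic heart is the field-comparison estimate. Using $\rho\in L^1\cap L^p$ and $(1+\alpha)p'<d$, I would first note that $D^2W*\rho\in L^\infty$ by Young/H\"older, so that $\lip_{\supp}(u)\leq C\|\rho\|$. To bound $\|u_N-u\|_\infty$, I would write $\mu_N-\rho$ through the $d_\infty$-optimal transport plan as a displacement of size at most $\eta$, so that
\[
(\nabla W*(\mu_N-\rho))(x)=\sum_i\int_{\Omega_i}\rho(y)\,[\nabla W(x-X_i)-\nabla W(x-y)]\,dy,
\]
and split the sum into a \emph{far} part and a \emph{near} part relative to the singularity. On the far part, $|\nabla W(x-X_i)-\nabla W(x-y)|\leq C\,|x-X_i|^{-(1+\alpha)}\eta$ by \eqref{main-assum-1}, and summing against $\mu_N\approx\rho\in L^{p}$ using $|\cdot|^{-(1+\alpha)}\in L^{p'}_{loc}$ gives a contribution $\lesssim\|\rho\|\,\eta$. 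On the near part I would exploit two facts: each mass satisfies $m_i\leq C\|\rho\|_p\,\eta^{d/p'}$ (the $\rho$-mass of a set of diameter $\lesssim\eta$), and the separation $\eta_m$ limits the number of neighbours in any ball; counting particles in concentric shells of width $\eta_m$ and summing the singular kernel $|\cdot|^{-\alpha}$ yields, after optimizing the splitting radius, a term controlled by $C\|\rho\|\,\eta^{d/p'}/\eta_m^{\alpha}$.

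Combining these, the two flows produce the coupled system, schematically,
\[
\frac{d}{dt}\eta\leq C\|\rho\|\Big(\eta+\frac{\eta^{d/p'}}{\eta_m^{\alpha}}\Big),
\qquad
\frac{d}{dt}\eta_m\geq -C\|\rho\|\Big(\eta_m+\frac{\eta^{d/p'}}{\eta_m^{\alpha}}\Big).
\]
I would then track the single scalar quantity $Q(t):=\eta(t)^{d/p'}/\eta_m(t)^{1+\alpha}$, which is exactly the combination appearing in \eqref{ini-assum}. A direct differentiation shows that $Q$ obeys a closed Riccati-type inequality $\dot Q\leq C\|\rho\|\,Q\,(1+Q)$ as long as $Q\leq 1$, so that $Q(t)\leq Q(0)e^{Ct}$ while it stays below the threshold. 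Since the hypothesis $0\leq\alpha<-1+d/p'$ gives $d/p'>1+\alpha$ and \eqref{ini-assum} forces $Q(0)\to0$, for $N$ large enough $Q$ remains small on all of $[0,T]$; a continuity argument then bars any finite-time blow-up of the bootstrap. This simultaneously prevents collisions ($\eta_m$ stays bounded away from $0$) and drives $\eta\to0$, yielding the claimed weak-$*$ convergence.

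The step I expect to be the main obstacle is precisely the near-field estimate and the self-consistency of the bootstrap: one must show that the genuinely singular contribution to $u_N-u$, and to the effective Lipschitz constant governing $\dot\eta_m$, is controlled entirely by $Q$, with the optimization over the splitting radius producing exactly the exponents $d/p'$ and $1+\alpha$. The particle counting via $\eta_m$ is what couples the two inequalities, and making the constants uniform so that $\dot Q$ closes up without a worse power of $Q$ is the delicate point; the borderline nature of the Newtonian case $\alpha\to d-1$ signals that this balance is sharp.
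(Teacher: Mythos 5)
Your proposal follows essentially the same route as the paper: the coupled differential inequalities for $\eta=d_\infty(\mu_N,\rho)$ and $\eta_m$ obtained by differentiating along the $d_\infty$-optimal map, the near/far splitting at scale $\sim\eta$ using \eqref{main-assum-1}, and a bootstrap controlling exactly the combination $\eta^{d/p'}\eta_m^{-(1+\alpha)}$ until time $T$ once \eqref{ini-assum} makes it small initially. The only cosmetic difference is in the near-field term, where the paper simply bounds the kernel by $\eta_m^{-\alpha}$ and uses $\rho(\mathcal{B})\lesssim\eta^{d/p'}\|\rho\|_p$ via H\"older, which is cleaner than (and should replace) your shell-counting with individual mass bounds, since the latter as stated would overcount by a factor $(\eta/\eta_m)^{d-\alpha}$.
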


\begin{remark}\label{rem:initialdata}
Let us first discuss the assumptions on the initial data in
Theorem \ref{main-thm}. The mean-field limit is valid for
particular approximations $\mu_N^0$ of $\rho^0$, that is, for well
chosen particle approximations of the initial data. In fact, a
procedure to construct initial atomic measures approximating the
initial condition in the sense of \eqref{ini-assum} is the
following: define a regular mesh of size $\ep$ and approximate
$\rho^0$ by a sum of Dirac masses $\mu_N^0$ located at the center
of the cells such that the mass at each particle is exactly equals
to the mass of $\rho^0$ contained in the associated cell. In that
case, we have $\eta^0 \sim \ep$ and $\eta_m^0 \sim \ep$ (for the
last condition we need that the mesh has some regularity). In that
case, the assumption~\eqref{ini-assum} is automatically fulfilled
since $(1+ \alpha) p' < d$. Notice that no bound on the masses
$m_i$ of the particles is required.
\end{remark}

\begin{proof}[Proof of Theorem \ref{main-thm}]
The proof of Theorem \ref{main-thm} is divided into three steps:
\begin{itemize}
\item In Step A, we estimate the growth of the $d_\infty$
Wasserstein distance between the continuum and the discrete
solutions $\eta$ that involves $\eta$ itself and $\eta_m$ in the
form:
    \begin{equation}\label{out-est-1}
    \frac{d\eta}{dt} \leq C \eta \|\rho\| \left( 1 + \eta^{d/p^{\prime}} \eta^{-(1 + \alpha)}_m \right).
    \end{equation}
\item In Step B, we estimate the decay of the minimum
inter-particle distance $\eta_m$, which also involves the terms
$\eta$ and $\eta_m$ in the form:
    \begin{equation}\label{out-est-2}
    \frac{d\eta_m}{dt} \geq - C \eta_m \|\rho\| \left( 1 + \eta^{d/p^{\prime}} \eta_m^{-(1 + \alpha)}\right).
    \end{equation}
\item In Step C, under the assumption of the initial approximation
\eqref{ini-assum}, we combine \eqref{out-est-1} and
\eqref{out-est-2} to conclude the desired result.
\end{itemize}

\

\textbf{Step A.-} We first introduce the flows generated by the
two velocity fields: $u(x,t) = -\nabla W * \rho$ and $u_N :=
-\nabla W
* \mu_N$. Let us remark that the convolution in the definition of
$u_N$ is just a notation for the right-hand side of
\eqref{disconti-agg} since the convolution of a Dirac Delta with a
(possibly) singular potential is not well-defined. These flows
$\Psi_N, \Psi:\bbr_{+} \times \bbr_{+} \times \bbr^d \to \bbr^d$
are defined as solutions of
\begin{equation}\label{traj}
\left\{ \begin{array}{ll}
\displaystyle \frac{d}{dt} (\Psi(t;s,x)) = u(t;s,\Psi(t;s,x)), &
\\[2mm]
\Psi(s;s,x) = x, &
\end{array} \right.
\end{equation}
for all $s,t \in [0,T]$, and
\begin{equation}\label{traj2}
\left\{  \begin{array}{ll}
\displaystyle \frac{d}{dt} (\Psi_N(t;s,x)) = u_N(t;s,\Psi_N(t;s,x)), &
\\[2mm]
\Psi_N(s;s,x) = x, &
\end{array} \right.
\end{equation}
for all $s,t\in [0,T^N_0]$. Notice that the solution $X_i(t)$ to
the system \eqref{disconti-agg} is well-defined and continuous by
the Cauchy-Lipschitz theorem as long as there is no collision
between particles. Since $\eta^0_m>0$, there exists $T^N_0
>0$ such that $\eta_m(t) > 0$ for $t \in [0,T^N_0]$ by continuity.
Then the flow map $\Psi_N(t;s,x)$ solution to \eqref{traj2} is
well-defined for $t,s \in [0,T^N_0]$. Now, let us check that the
flow for the solution associated to the continuum equation in
\eqref{traj} is well-defined. Assumptions \eqref{main-assum-1}
imply that
\begin{equation}\label{key}
|\nabla W (x) - \nabla W (y)| \leq \frac{2|x-y|}{\min
(|x|,|y|)^{\alpha + 1}}\,.
\end{equation}
One can see this by integrating along a straight line joining $x$
and $y$ but avoiding the singularity using a small circle if
needed, see \cite{Hau}. The estimate \eqref{key} implies that the
velocity field is Lipschitz continuous with respect to the spatial
variable. Actually, one can estimate it as
\begin{align*}
|u(t,x)-u(t,y)| &\leq \int_{\bbr^d} | \nabla W(x-z) - \nabla W
(y-z) | \rho(t,z) \,dz \\
&\leq 2|x-y| \int_{\bbr^d}\frac{1}{\min (|x-z|,|y-z|)^{\alpha +
1}} \rho(t,z) \,dz \\
&\leq 4|x-y| \sup_{x\in \bbr^d}\int_{\bbr^d}\frac{1}{|x-z|^{\alpha
+ 1}} \rho(t,z) \,dz\,.
\end{align*}
Now, splitting the last integral into the near- and far-field sets
$\mathcal{A} := \{ z : |x - z| \geq 1 \}$ and $\mathcal{B} :=
\bbr^d - \mathcal{A}$ and estimating the two terms, we deduce
\begin{align}
\int_{\bbr^d}\frac{1}{|x-z|^{\alpha + 1}} \rho(t,z) \,dz &\leq
\|\rho(t)\|_{1} + \left( \int_{\mathcal{B}} \frac{1}{|x-y|^{(1 +
\alpha)p^{\prime}}} dy \right)^{1/p^{\prime}}\|\rho(t)\|_{p}\nonumber\\
&\leq C \|\rho\|\,,\label{nearfar}
\end{align}
for all $x\in \bbr^d$ due to the assumption $(1 +
\alpha)p^{\prime}<d$. Putting together previous inequalities, we
get the desired Lipschitz continuity of the velocity field with
respect to $x$, which is moreover uniform in time. A similar
estimate using \eqref{main-assum-1} shows that the velocity field
is bounded, and then the flow $\Psi$ in \eqref{traj} is
well-defined. Our first aim is to find an expansion of the
velocity of the $d_\infty$ Wasserstein distance. The idea is
similar to the evolution of the euclidean Wassertein distance in
\cite{Carrillo-McCann-Villani03,Carrillo-McCann-Villani06,otto:geom:99}.
Fixed $0\leq t_0 < \min(T,T_0^N)$ and choose an optimal transport
map for $d_{\infty}$ denoted by $\mathcal{T}^0$ between
$\rho(t_0)$ and $\mu_N(t_0)$; $\mu_N(t_0) = \mathcal{T}^0 \#
\rho(t_0)$. It is known that such an optimal transport map exists
when $\rho(t_0)$ is absolutely continuous with respect to the
Lebesgue measure \cite{C-D-P}. Then it follows from Theorem
\ref{BLRlike-thm} that $\rho(t) = \Psi(t;t_0,\cdot ~ ) \#
\rho(t_0)$ and obviously $\mu_N(t) = \Psi_N(t;t_0,\cdot ~ ) \#
\mu_N(t_0)$ for $t\geq t_0$. We also notice that for $t\geq t_0$
\[
\mathcal{T}^t \# \rho(t) = \mu_N(t), \quad \mbox{where} \quad
\mathcal{T}^t = \Psi_N(t;t_0,\cdot) \circ \mathcal{T}^0 \circ
\Psi(t_0;t,\cdot).
\]
By Definition \ref{defdp} of the $d_p$ Wasserstein distance, we
get
\[
d_p^p\left( \mu_N(t), \rho(t) \right) \leq \int_{\bbr^d}
|\Psi(t;t_0,x) - \Psi_N(t;t_0,\mathcal{T}^0(x))|^p \rho(t_0,x) dx.
\]
In the case of $p = \infty$, we obtain
\[
\eta(t) = d_{\infty}(\mu_N(t),\rho(t)) \leq \| \Psi(t;t_0,\cdot) -
\Psi_N(t;t_0,\cdot)\circ \mathcal{T}^0 \|_{\infty}.
\]
We notice that
\[
\frac{d}{dt} \left( \Psi_N(t;t_0,\mathcal{T}^0(x)) - \Psi(t;t_0,x)
\right)\Big|_{t=t_0} =  u_N(t_0,\mathcal{T}^0(x)) - u(t_0,x).
\]
Thus, writing the integral form, dividing by $t-t_0$, and taking
the limit $t\to t_0^+$ we easily get
\begin{equation}\label{mt-est-1}
\frac{d}{dt} \| \Psi_N(t;t_0,\cdot) \circ \mathcal{T}^0 -
\Psi(t;t_0,\cdot) \|_{\infty} \Big|_{t=t_0^+} \leq \|
u_N(t_0,\cdot) \circ \mathcal{T}^0 - u(t_0,\cdot) \|_{\infty}.
\end{equation}
We now note that
\begin{align*}
u_N(t_0,\mathcal{T}^0 &(x)) - u(t_0,x)  \\
&=-\int_{\bbr^d} \nabla W(\mathcal{T}^0(x) - y)d\mu_N(t_0,y) +
\int_{\bbr^d} \nabla W (x-y) \rho(t_0,y) dy \cr  &= -
\int_{\bbr^d} \left( \nabla W(\mathcal{T}^0 (x) - \mathcal{T}^0
(y)) - \nabla W (x-y) \right)\rho(t_0,y) dy.
\end{align*}
For notational simplicity, we omit the time dependency on $t_0$ in
the next few computations. This yields that \eqref{mt-est-1} can
be rewritten as
\begin{equation}\label{mt-est-2}
\frac{d^+\eta}{dt} \leq C \sup_{x \in \bbr^d} \int_{\bbr^d} |
\nabla W(\mathcal{T} (x) - \mathcal{T} (y)) - \nabla W (x-y) |
\rho(y) dy .
\end{equation}
We decompose the integral on $\bbr^d$ into the near- and the
far-field parts as $\mathcal{A} := \{ z : |x - z| \geq 4\eta \}$ and $\mathcal{B} :=
\bbr^d - \mathcal{A}$ as
\begin{align}\label{mt-est-3}
\begin{aligned}
\int_{\bbr^d} | \nabla W(\mathcal{T} (x) - \mathcal{T} (y)) -
\nabla W (x-y) | \rho(y) dy &= \int_{\mathcal{A}} \cdots + \int_{\mathcal{B}} \cdots
\cr &:= \mathcal{I}_1 + \mathcal{I}_2.
\end{aligned}
\end{align}
For the estimate in the set $\mathcal{A}$, we use
\[
|\mathcal{T} (x) - \mathcal{T} (y)| \geq |x-y| - |\mathcal{T} (x)
- x| - |\mathcal{T} (y) - y| \geq |x-y| - 2\eta \geq \frac{|x-y|}2
\]
together with \eqref{key} and \eqref{nearfar} to obtain
\begin{align}
\mathcal{I}_1 &\leq \int_{\mathcal{A}} \frac{2\left( |x - \mathcal{T} (x)| +
|y - \mathcal{T} (y)| \right)}{\min (|x-y|,|\mathcal{T}
(x)-\mathcal{T} (y)|)^{\alpha + 1}} \rho(y) dy \nonumber\\ &\leq
4\eta \int_{\mathcal{A}} \left( \frac{1}{|x-y|^{\alpha + 1}} + \frac{2^{\alpha
+ 1}}{|x-y|^{\alpha + 1}} \right)\rho(y) dy \leq C\eta\int_{\mathcal{A}}
\frac{1}{|x-y|^{\alpha + 1}} \rho(y) dy \nonumber\\ &\leq C\eta
\int_{\bbr^d} \frac{1}{|x-y|^{\alpha + 1}} \rho(y) dy \leq C\eta
\|\rho\|.\label{mt-est-4}
\end{align}
For the second part $\mathcal{I}_2$, we estimate separately each
term using \eqref{main-assum-1} to deduce
\begin{align}\label{mt-est-5}
\begin{aligned}
\mathcal{I}_2 &\leq \int_{\mathcal{B}} \frac{\rho(y)}{|x-y|^{\alpha}} dy +
\int_{\mathcal{B}} \frac{\rho(y)}{\eta^{\alpha}_m}dy \cr &\leq \left( \int_{\mathcal{B}}
\frac{1}{|x - y|^{\alpha p^{\prime}}} dy \right)^{1/p^{\prime}}
\|\rho\|_{p} + \frac{1}{\eta_m^{\alpha}} \left(\int_{\mathcal{B}} 1 dy
\right)^{1/p^{\prime}} \|\rho\|_{p} \cr &\leq
C(\eta^{d/p^{\prime}-\alpha} + \eta^{d/p^{\prime}}
\eta^{-\alpha}_m ) \|\rho\|_{p} \leq C (\eta^{d/p^{\prime}-\alpha}
+ \eta^{d/p^{\prime}} \eta^{-\alpha}_m) \|\rho\|\,.
\end{aligned}
\end{align}
Notice that $|\mathcal{T} (x) - \mathcal{T} (y)|\geq \eta_m$ by
definition of the minimum inter-particle distance
\eqref{notat-simp} as soon as $\mathcal{T} (x) \neq \mathcal{T}
(y)$, $\nabla W(\mathcal{T} (x) - \mathcal{T} (y))=0$ otherwise.

Finally, we choose two indices $i,j$ so that $|X_i - X_j| =
\eta_m$, then we observe that the middle point between $X_i$ and
$X_j$ has to be transported by $\mathcal{T}$ to either $X_i$ or
$X_j$, and thus $\eta_m \leq 2\eta$. Hence by combining
\eqref{mt-est-2}-\eqref{mt-est-5} and being $t_0$ arbitrary in
$[0,\min(T,T_0^N))$, we have
\begin{equation}\label{ma-re-1}
\frac{d^+\eta}{dt} \leq C \eta \|\rho\| \left( 1 +
\eta^{d/p^{\prime}-1} \eta^{-\alpha}_m \right) \leq C \eta
\|\rho\| \left( 1 + \eta^{d/p^{\prime}} \eta^{-(1 + \alpha)}_m
\right)\,,
\end{equation}
for all $t\in[0,\min(T,T_0^N))$.

\

\textbf{Step B.-} We now focus on showing the lower bound estimate
of $\eta_m$ to make the system \eqref{ma-re-1} closed. We again
choose two indices $i,j$ so that $|X_i - X_j| = \eta_m$.
Neglecting the time dependency to simplify the notation, we get
\begin{align*}
\begin{aligned}
\frac{d}{dt} | X_i - X_j| &\geq -|u_N(X_i) - u_N(X_j)| \cr &\geq -
\int_{\bbr^d} |\nabla W (X_i - y) - \nabla W (X_j - y)|
\,d\mu_N(y)
 \cr &= - \int_{\bbr^d} |\nabla W (X_i - \mathcal{T} (y)) -
\nabla W (X_j - \mathcal{T} (y))| \,\rho(y) dy\,,
\end{aligned}
\end{align*}
where $\mathcal{T}$ is the optimal map satisfying $\mu_N(t) =
\mathcal{T} \# \rho(t)$, for each $t\in[0,\min(T,T_0^N))$. Similar
to \eqref{mt-est-3}, we split in near- and far-field parts the
domain $\bbr^d$ as $\mathcal{A} := \{ y : |X_i - y| \geq 2\eta
\mbox{ and } |X_j - y| \geq 2\eta \}$ and $\mathcal{B} := \bbr^d -
\mathcal{A}$. We can again use \eqref{key} to deduce
\begin{align}\label{ma-re-1-est-1}
 \int_{\mathcal{A}} |&\nabla W (X_i - \mathcal{T}(y)) - \nabla W (X_j -
\mathcal{T}(y))|\, \rho(y) dy  \\ & \leq \int_{\mathcal{A}}
\frac{2|X_i-X_j|}{\min (|X_i - \mathcal{T} (y)|,|X_j - \mathcal{T}
(y)|)^{\alpha + 1}} \,\rho(y) dy \nonumber \\ & \leq
2^{2+\alpha}|X_i-X_j| \int_{\mathcal{A}} \left(\frac{1}{|X_i - y|^{\alpha +
1}} + \frac{1}{|X_j - y|^{\alpha + 1}}\right) \rho(y) dy \leq C
\eta_m \|\rho\|,\nonumber
\end{align}
where we used that $|X_i - \mathcal{T}(y)| \geq | X_i - y| - \eta
\geq \frac{1}{2} |X_i - y|$ and similarly for $X_j$ together with
\eqref{nearfar}. For the integral over $\mathcal{B}$, we use that as soon as
$X_i \neq \mathcal{T}(y)$, then we obtain from
\eqref{main-assum-1} that
\begin{equation*}
|\nabla W(X_i - \mathcal{T}(y))| \leq \frac{1}{|X_j -
\mathcal{T}(y)|^{\alpha}} \leq \frac{1}{\eta^{\alpha}_m},
\end{equation*}
and $\nabla W(X_i - \mathcal{T}(y))=0$ otherwise, and similarly
for $X_j$. A simple H\"older computation as in \eqref{nearfar}
implies that
\[
\int_{\mathcal{B}} \rho(y) dy \leq C\eta^{d/p^{\prime}} \|\rho\|\,,
\]
from which we infer that
\begin{equation}\label{ma-re-1-est-2}
\int_{\mathcal{B}} |\nabla W (X_i - \mathcal{T} (y)) - \nabla W (X_j -
\mathcal{T} (y))|\, \rho(y) dy \leq C\eta^{d/p^{\prime}}
\eta^{-\alpha}_m\|\rho\| .
\end{equation}
Putting together \eqref{ma-re-1-est-1} and \eqref{ma-re-1-est-2},
we finally conclude that
\begin{equation}\label{ma-re-2}
\frac{d\eta_m}{dt} \geq - C \eta_m \|\rho\| \left( 1 +
\eta^{d/p^{\prime}} \eta_m^{-(1 + \alpha)}\right)\,,
\end{equation}
for all $t\in[0,\min(T,T_0^N))$.

\

\textbf{Step C.-} Until now, we have proved from \eqref{ma-re-1}
and \eqref{ma-re-2} that
\begin{equation}\label{step-c-1}
\left\{  \begin{array}{ll} \displaystyle \frac{d^+\eta}{dt} &\leq
C \eta \|\rho\| \left( 1 + \eta^{d/p^{\prime}} \eta^{-(1 +
\alpha)}_m \right),
\\[3mm]
\displaystyle \frac{d\eta_m}{dt} &\geq - C \eta_m \|\rho\| \left(
1 + \eta^{d/p^{\prime}} \eta_m^{-(1 + \alpha)}\right),
\end{array} \right.
\end{equation}
for $t \in [0,\min(T,T_0^N))$. We first notice from
\eqref{step-c-1} that if $\eta^{d/p^{\prime}} \eta_m^{-(1 +
\alpha)} \leq 1$, then
\begin{equation}\label{ma-re-2-est-1}
\eta(t) \leq \eta^0 e^{2\|\rho\| t} \quad \mbox{and} \quad
\eta_m(t) \geq \eta_m^0 e^{-2\|\rho\| t} \quad t \in
[0,\min(T,T_0^N)).
\end{equation}
We now show that \eqref{ma-re-2-est-1} holds for time $t \in
[0,T]$ when $N$ goes to infinity, in other words that $T < T_0^N$
when $N$ is sufficiently large. For this, we set
\[
f(t) := \frac{\eta(t)}{\eta^0}, \quad g(t) := \frac{\eta_m(t)}{\eta^0_m} \quad \mbox{and} \quad \xi_N := (\eta^0)^{d/p^{\prime}} (\eta_m^0)^{-(1 + \alpha)}.
\]
Note that $\xi_N$ depends on the number of particles $N$ as in
\eqref{notat-simp}. It yields
\begin{align*}
\begin{aligned}
\frac{d^+f}{dt} &\leq C\|\rho\| \,f\,\left(1 + \xi_N f^{d/p^{\prime}}
g^{-(1 + \alpha)}\right),\cr \frac{dg}{dt} &\geq -C\|\rho\|\, g \, \left(1 +
\xi_N f^{d/p^{\prime}} g^{-(1 + \alpha)}\right).
\end{aligned}
\end{align*}
Since $f(0)=g(0)=1$ and $\xi_N \to 0$ as $N$ goes to infinity, we
obtain that there exists a positive constant $T_*^N (\leq T_0^N)$
such that
\[
\xi_N f^{d/p^{\prime}} g^{-(1 + \alpha)} \leq 1 \quad \mbox{for}
\quad t \in [0,T_*^N]\,,
\]
for sufficiently large $N$.
Then it follows from \eqref{ma-re-2-est-1} that
\[
f(t) \leq e^{2\|\rho\| t} \quad \mbox{and} \quad g(t) \geq
e^{-2\|\rho\| t}.
\]
This yields $\xi_N f^{d/p^{\prime}} g^{-(1 + \alpha)} \leq \xi_N
e^{2(d/p^{\prime} + (1 + \alpha))\|\rho\| t}$, that is,
\[
\xi_N f^{d/p^{\prime}} g^{-(1 + \alpha)} \leq 1 \quad \mbox{holds
for} \quad t \leq  -\frac{\ln(\xi_N)}{2(d/p^{\prime} + (1 +
\alpha))\|\rho\|},
\]
so that
\[
-\frac{\ln(\xi_N)}{2(d/p^{\prime} + (1 + \alpha))\|\rho\|} \leq
T_*^N\,.
\]
On the other hand, our assumption for the initial data \eqref{ini-assum} implies
\[
\liminf_{N \to \infty} T_*^N \geq \lim_{N \to \infty}
-\frac{\ln(\xi_N)}{2(d/p^{\prime} + (1 + \alpha))\|\rho\|}
=\infty\,,
\]
and thus for $N$ large enough, $T<T_*^N<T_0^N$. This completes the
proof.
\end{proof}

\begin{remark}
One can use almost the same argument with the above to obtain an
stability estimate in $d_\infty$: let $\rho_1$ and $\rho_2$ be
solutions given by Theorem \ref{BLRlike-thm} to the system
\eqref{conti-agg} satisfying \eqref{main-assum-1}, then we have
\[
\frac{d}{dt} d_{\infty}(\rho_1(t),\rho_2(t)) \leq C
\max(\|\rho_1\|, \|\rho_2\|)\,
d_{\infty}(\rho_1(t),\rho_2(t))\,.
\]
\end{remark}

\vspace{0.2cm} In fact, the estimate of mean field limit in
Theorem \ref{main-thm} holds for $-1\leq\alpha< 0$ without any
condition on $\eta^0$ and $\eta_m^0$. This is coherent with the
results in \cite{CDFLS} in which the mean field limit is obtained
for all measure initial data without restriction in the way
initial data are approximated by Dirac masses at least for
attractive potentials.

\begin{corollary}\label{corol-1}
Suppose the interaction potential $W$ satisfies
\eqref{main-assum-1} with $-1 \leq \alpha < 0$, and let $\rho$ be
a solution to the system \eqref{conti-agg} such that $\rho \in
L^{\infty}(0,T;(L^1 \cap L^p)(\R^d))\cap
\mathcal{C}([0,T],\mathcal{P}_1(\R^d))$. Suppose that
\[
d_{\infty}(\mu_N^0,\rho^0) \to 0 \quad \mbox{as} \quad N \to \infty.
\]
Then for any solution of the ODE system \eqref{disconti-agg} the associated empirical distributions $\mu_N(t)$ converge toward $\rho(t)$ uniformly in time:
\[
\sup_{t \in [0,T]} d_{\infty}(\mu_N(t),\rho(t)) \to 0 \quad \mbox{as} \quad N \to \infty.
\]
\end{corollary}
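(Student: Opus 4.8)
The plan is to re-run Step A of the proof of Theorem \ref{main-thm} while dropping Step B entirely: when $\alpha<0$ the interaction force is no longer singular at the origin, since \eqref{main-assum-1} gives $|\nabla W(z)|\le C|z|^{-\alpha}$ with $-\alpha>0$, so $\nabla W$ is bounded and in fact vanishes as $z\to0$. Consequently particle encounters can no longer destroy the distance estimate, and one never needs a lower bound on the minimum inter-particle distance $\eta_m$. This is precisely why the restrictive hypothesis \eqref{ini-assum} can be removed here, and why the statement only asks for $d_{\infty}(\mu_N^0,\rho^0)\to0$.

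Concretely, I would keep the derivation \eqref{mt-est-2}--\eqref{mt-est-4} verbatim, so that the near-field contribution still obeys $\mathcal{I}_1\le C\eta\|\rho\|$, and reuse the far-field bound \eqref{mt-est-5}, namely $\mathcal{I}_2\le C(\eta^{d/p'-\alpha}+\eta^{d/p'}\eta_m^{-\alpha})\|\rho\|$. The only new input is the elementary geometric inequality $\eta_m\le2\eta$ already established at the end of Step A. Since $-\alpha>0$, the map $s\mapsto s^{-\alpha}$ is increasing, so $\eta_m^{-\alpha}\le(2\eta)^{-\alpha}$ and the offending term collapses onto the first one, $\eta^{d/p'}\eta_m^{-\alpha}\le 2^{-\alpha}\eta^{d/p'-\alpha}$. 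Combining the two pieces yields a closed scalar differential inequality for $\eta$ alone,
\[
\frac{d^+\eta}{dt}\le C\|\rho\|\bigl(\eta+\eta^{d/p'-\alpha}\bigr),
\]
in which $\eta_m$ has disappeared and Step B is rendered unnecessary.

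To conclude I would run a continuity (bootstrap) argument. The standing condition $(1+\alpha)p'<d$ used in \eqref{nearfar} is exactly $d/p'-\alpha>1$, so the exponent $d/p'-\alpha$ is strictly superlinear; on the region where $\eta\le1$ one has $\eta^{d/p'-\alpha}\le\eta$ and the inequality linearizes to $\frac{d^+\eta}{dt}\le 2C\|\rho\|\eta$, giving by Gr\"onwall $\eta(t)\le\eta^0 e^{2C\|\rho\| t}$. Since $\eta^0=d_{\infty}(\mu_N^0,\rho^0)\to0$, for $N$ large the right-hand side stays below $1$ on all of $[0,T]$, so the bootstrap set is the whole interval and $\sup_{t\in[0,T]}\eta(t)\le\eta^0 e^{2C\|\rho\| T}\to0$, which is the claimed uniform-in-time $d_{\infty}$ convergence.

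The main points needing care are structural rather than computational. For $\alpha\in(-1,0)$ the field $\nabla W$ is only H\"older — not Lipschitz — near the origin, as $|D^2W(z)|\le C|z|^{-(1+\alpha)}$ blows up there, so the particle trajectories of \eqref{disconti-agg} need not be unique; this is exactly why the corollary quantifies over \emph{any} solution of the ODE system. I would check that the flow-map computation of Step A uses only the transport identities $\rho(t)=\Psi(t;t_0,\cdot)\#\rho(t_0)$ and $\mu_N(t)=\Psi_N(t;t_0,\cdot)\#\mu_N(t_0)$ evaluated along a fixed selection of trajectories, so the differential inequality above holds verbatim for whichever solution is chosen. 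One must also know the trajectories live on all of $[0,T]$: since $\alpha\ge-1$ the force grows at most linearly at infinity and stays bounded across collisions, so a standard a priori bound on $\max_i|X_i(t)|$ excludes finite-time blow-up. Neither point is deep, but together they are what replace the collision-avoidance bookkeeping (the time $T_0^N$ and the lower bound on $\eta_m$) of Theorem \ref{main-thm}.
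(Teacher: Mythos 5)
Your argument is essentially the paper's: the authors likewise re-run only Step A, use the smallness of $\nabla W$ near the origin created by $\alpha<0$ to eliminate $\eta_m$, and close a scalar differential inequality for $\eta$ alone; they then integrate the resulting Bernoulli-type inequality explicitly, whereas you linearize via a bootstrap on $\{\eta\le 1\}$ using $d/p'-\alpha>1$ — the two endgames are equivalent and yours is arguably cleaner. One step of yours does not survive scrutiny as written: you cannot ``reuse \eqref{mt-est-5} verbatim.'' The term $\eta^{d/p'}\eta_m^{-\alpha}$ there comes from bounding $|\nabla W(\mathcal{T}(x)-\mathcal{T}(y))|\le C|\mathcal{T}(x)-\mathcal{T}(y)|^{-\alpha}\le C\eta_m^{-\alpha}$ using the lower bound $|\mathcal{T}(x)-\mathcal{T}(y)|\ge\eta_m$, and that last implication requires $\alpha\ge0$; for $\alpha<0$ it reverses, by exactly the monotonicity of $s\mapsto s^{-\alpha}$ that you invoke two lines later. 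The repair is immediate and is what the paper does in its estimate of $\mathcal{K}_2$: on the near set $\mathcal{B}$ one has $|x-y|\le 4\eta$, hence $|\mathcal{T}(x)-\mathcal{T}(y)|\le|x-y|+2\eta\le 6\eta$, so both terms of the integrand are bounded directly by $C\eta^{-\alpha}$ (respectively $C|x-y|^{-\alpha}$), giving $\mathcal{I}_2\le C\eta^{d/p'-\alpha}\|\rho\|$ — the same conclusion you reach, now legitimately. Your structural remarks (Peano existence and non-uniqueness for a merely H\"older field, hence quantifying over any solution; boundedness of the force across collisions so that no lower bound on $\eta_m$ is needed) coincide with the paper's observations surrounding the corollary.
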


\begin{remark}
It is remarkable that even if we do not have uniqueness of
solution of \eqref{disconti-agg} under assumption
\eqref{main-assum-1} with $-1 \leq \alpha < 0$, we get the mean
field limit without restriction. If one collision occurs, then
uniqueness may lost, but the existence of solution is still
guaranteed. Thus Corollary \ref{corol-1} is interesting because it
is valid for density solutions to \eqref{conti-agg} even if
collisions occur and uniqueness is lost at the particle level.
\end{remark}

\begin{proof}[Proof of Corollary \ref{corol-1}]
We first notice that the existence of solutions to the ODE system
\eqref{disconti-agg} is guaranteed thanks to Cauchy-Peano-Arzela theorem since
$\alpha$ is strictly negative with \eqref{main-assum-1} implies that $\nabla W$
is continuous. One can
use the
same arguments as in the proof of Theorem \ref{main-thm} to find
\begin{align*}
\begin{aligned}
\frac{d^+\eta}{dt} &\leq C \sup_{x \in \bbr^d} \left(\int_{\mathcal{A}} +
\int_{\mathcal{B}}\right) | \nabla W(\mathcal{T} (x) - \mathcal{T} (y)) -
\nabla W (x-y) |\, \rho(y) dy\cr & := \mathcal{K}_1 +
\mathcal{K}_2,
\end{aligned}
\end{align*}
where the same notation for the sets $\mathcal{A}$ and $\mathcal{B}$ is used and the
time dependency has been avoided for simplicity. Using
\eqref{mt-est-4} we estimate $\mathcal{K}_1$ by $C\eta \|\rho\|$.
To estimate $\mathcal{K}_2$, we use that $\alpha<0$ to get
\[
|\nabla W (\mathcal{T} (x) - \mathcal{T} (y)) - \nabla W (x-y)|
\leq \frac{C}{\eta^{\alpha}} + \frac{C}{|x-y|^{\alpha}}\,,
\]
and to obtain by H\"older's inequality that
\begin{align*}
\begin{aligned}
\mathcal{K}_2 &\leq C\int_{\mathcal{B}} \frac{\rho(y)}{|x-y|^{\alpha}} dy +
\frac{C}{\eta^{\alpha}}\int_{\mathcal{B}} \rho(y) dy \leq
C\eta^{d/p^{\prime}-\alpha} \|\rho\|_{p} + C\eta^{d/p^{\prime}}
\eta^{-\alpha} \|\rho\|_{p} \cr &\leq C\eta^{d/p^{\prime}-\alpha}
\|\rho\| \,.
\end{aligned}
\end{align*}
Hence, we have
\[
\frac{d^+\eta}{dt} \leq C \eta \|\rho\| \left( 1 +
\eta^{d/p^{\prime}-\alpha-1} \right),
\]
and this yields for sufficiently large $N$
\begin{equation*}
\eta(t) \leq \left( (\eta^0)^{1 - (d/p' - \alpha)} e^{-C \|\rho\|
(d/p' - \alpha - 1)t} + e^{-C \|\rho\| (d/p' - \alpha - 1)t} - 1
\right)^{-\frac{1}{d/p' - \alpha - 1}},
\end{equation*}
for all $t \in [0,T]$. Note that $d/p' - \alpha - 1 > 0$ and then,
the right hand side of previous estimate goes to zero as $N$ goes
to infinity. This completes the proof.
\end{proof}

We next show that there is no collision between particles when the
initial quantities $\eta^0$ and $\eta_m^0$ in \eqref{notat-simp}
satisfy
\begin{equation}\label{ini-assum-cor}
\lim_{N \to \infty}
\frac{(\eta^0)^{d/p^{\prime}-\alpha}}{\eta^0_m} = 0.
\end{equation}
Note that the same strategy as in Remark \ref{rem:initialdata}
allows us to find suitable approximations for the initial data
satisfying \eqref{ini-assum-cor}.

\vspace{0.2cm}

\begin{corollary}\label{coroll-2}
Under the assumptions of Corollary \ref{corol-1} with $-1 \le
\alpha < 0$, if we further assume that $\eta^0,~\eta_m^0$ satisfy
\eqref{ini-assum-cor}. Then we have that for $N$ large enough, the
particle system \eqref{disconti-agg} is uniquely well-defined till
time $T$ in the sense that there is no collision between particles
before that time, and the convergence
\[
\sup_{t \in [0,T]} d_{\infty}(\mu_N(t),\rho(t)) \to 0 \quad
\mbox{as} \quad N \to \infty \,,
\]
holds.
\end{corollary}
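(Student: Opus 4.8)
The plan is to show that the two closest particles cannot meet before time $T$ by deriving a lower bound on the minimum inter-particle distance $\eta_m$ tailored to the regime $-1\le\alpha<0$, and then to combine it with the control on $\eta$ already furnished by Corollary \ref{corol-1}. Existence of a solution to \eqref{disconti-agg} follows from Cauchy--Peano exactly as in Corollary \ref{corol-1}, since $\nabla W$ is continuous when $\alpha<0$; uniqueness will then come for free once we know the particles stay distinct, because away from the diagonal $\nabla W\in\mathcal{C}^1(\bbr^d\setminus\{0\})$ renders the right-hand side of \eqref{disconti-agg} locally Lipschitz in the configuration. From Corollary \ref{corol-1} we already have $\sup_{t\in[0,T]}\eta(t)\le\epsilon_N$ with $\epsilon_N\to0$, and inspecting its explicit bound shows $\epsilon_N\le C\eta^0$ for $N$ large, so that $\epsilon_N^{d/p^{\prime}-\alpha}\le C(\eta^0)^{d/p^{\prime}-\alpha}$; this is precisely the quantity that \eqref{ini-assum-cor} forces to be negligible against $\eta_m^0$.

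The main step is the lower bound for $\eta_m$. I would pick indices $i,j$ realizing $|X_i-X_j|=\eta_m$ and reproduce the computation of Step B, splitting $\bbr^d$ into the near field $\mathcal{A}=\{\,|X_i-y|\ge2\eta\text{ and }|X_j-y|\ge2\eta\,\}$ and its complement $\mathcal{B}$. On $\mathcal{A}$ the estimate \eqref{ma-re-1-est-1} is unchanged and valid for every $\alpha\ge-1$, contributing $\le C\eta_m\|\rho\|$. The delicate part is $\mathcal{B}$: the bound $|\nabla W|\le\eta_m^{-\alpha}$ used when $\alpha\ge0$ points the wrong way here. The fix is to exploit the geometric inequality $\eta_m\le2\eta$ from Step A: if $y\in\mathcal{B}$, say $|X_i-y|<2\eta$, then $|X_j-y|\le\eta_m+|X_i-y|\le4\eta$, so after applying the $d_\infty$-optimal map $\mathcal{T}$ (which displaces points by at most $\eta$) both $|X_i-\mathcal{T}(y)|$ and $|X_j-\mathcal{T}(y)|$ are $\le5\eta$, whence \eqref{main-assum-1} bounds both $|\nabla W(X_i-\mathcal{T}(y))|$ and $|\nabla W(X_j-\mathcal{T}(y))|$ by $C\eta^{-\alpha}$. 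A Hölder estimate as in \eqref{nearfar} gives $\int_{\mathcal{B}}\rho\,dy\le C\eta^{d/p^{\prime}}\|\rho\|$, so the far-field contribution is $\le C\eta^{d/p^{\prime}-\alpha}\|\rho\|$. Collecting the two pieces yields
\[
\frac{d\eta_m}{dt}\ge -C\|\rho\|\big(\eta_m+\eta^{d/p^{\prime}-\alpha}\big).
\]

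With this closed differential inequality I would conclude in the spirit of Step C. On any interval $[0,\min(T,T^N_0))$ preceding the first collision time $T^N_0$ we have $\eta\le\epsilon_N$, hence $\eta^{d/p^{\prime}-\alpha}\le C(\eta^0)^{d/p^{\prime}-\alpha}$, and a Gronwall argument gives
\[
\eta_m(t)\ge e^{-C\|\rho\|T}\eta_m^0-C(\eta^0)^{d/p^{\prime}-\alpha}
=\eta_m^0\Big(e^{-C\|\rho\|T}-C\,\tfrac{(\eta^0)^{d/p^{\prime}-\alpha}}{\eta_m^0}\Big).
\]
By \eqref{ini-assum-cor} the ratio tends to $0$, so the right-hand side is strictly positive for $N$ large; letting $t\to T^N_0$ and using continuity contradicts $\eta_m(T^N_0)=0$ unless $T^N_0>T$. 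Thus for $N$ large no collision occurs on $[0,T]$, the solution is unique there by the local Lipschitz property noted above, and the uniform convergence $\sup_{t\in[0,T]}d_\infty(\mu_N(t),\rho(t))\to0$ is inherited directly from Corollary \ref{corol-1}. The only genuine obstacle is the far-field bound for $\alpha<0$, and the quantitative use of $\eta_m\le2\eta$ to trade the now-useless singular \emph{lower} bound on distances for an \emph{upper} bound of size $\eta$ is what makes the argument close.
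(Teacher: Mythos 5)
Your proposal is correct and follows essentially the same route as the paper: the paper's proof simply asserts the differential inequality $\frac{d\eta_m}{dt}\ge -C\eta_m\|\rho\|\bigl(1+\eta^{d/p'-\alpha}\eta_m^{-1}\bigr)$ as a consequence of Step B of Theorem \ref{main-thm} and then invokes Step C, which is exactly the inequality you derive and the Gronwall argument you run. The one place where you add genuine content is the far-field estimate for $\alpha<0$ (using $\eta_m\le 2\eta$ and the displacement bound of the $d_\infty$-optimal map to get $|X_i-\mathcal{T}(y)|\le 5\eta$, hence $|\nabla W|\le C\eta^{-\alpha}$ on $\mathcal{B}$), a detail the paper leaves implicit; this is the right fix and matches the exponent $\eta^{d/p'-\alpha}$ stated in the paper.
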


\begin{proof} The proof of Corollary \ref{corol-1} shows that for sufficiently large $N$
\[
\eta \leq \left( (\eta^0)^{1 - (d/p' - \alpha)} e^{-C \|\rho\| (d/p'
- \alpha - 1)t} + e^{-C \|\rho\| (d/p' - \alpha - 1)t} - 1
\right)^{-\frac{1}{d/p' - \alpha - 1}}.
\]
For the estimate of $\eta_m$, one can obtain from the proof of
Theorem \ref{main-thm} that
\[
\frac{d\eta_m}{dt} \geq - C \eta_m \|\rho\| \left( 1 +
\eta^{d/p^{\prime}-\alpha} \eta_m^{-1}\right) \quad \mbox{for all}
\quad t \in [0,\min(T,T_0^N)),
\]
where $T_0^N$ denotes the first collision time between particles.
Then we conclude the desired result employing the same arguments
in Step C of Theorem \ref{main-thm} using \eqref{ini-assum-cor}.
\end{proof}

As a corollary of Theorem \ref{main-thm}, we consider interaction
potentials under weaker assumptions than \eqref{main-assum-1}:
there exists $R > 0$ such that $W$ satisfies
\begin{equation}\label{local-assum}
|\nabla W(x)|\leq \frac{C}{|x|^{\alpha}}, \quad \mbox{and} \quad
|D^2 W(x)| \leq \frac{C}{|x|^{1+\alpha}}, \quad \forall ~ x \in
B(0,R),
\end{equation}
where $B(0,R) :=\{ x \in \R^d : |x| < R \}$. Then one can assume
that the initial data $\rho^0$ has compact support, and show that
the local solution $\rho(t)$ has compact support on a small time
interval $[0,T]$. This is possible since characteristics are
locally in time well defined and the velocity is uniformly bounded
under the assumptions \eqref{local-assum} initially. This argument
was made rigorous under stricter assumptions on the local
behaviour of the interaction potential but allowing growth of the
potential at infinity in \cite{B-C}. Thus, one can cut-off the
potential outside a large ball in such a way that the solution is
unaffected but the potential satisfies the global assumption
$\nabla W \in \mathcal{W}^{1,p^{\prime}}(\R^d)$ entering the
well-posedness theory in \cite{B-L-R} or satisfying
\eqref{main-assum-1} allowing for the application of Theorem
\ref{BLRlike-thm}. Concerning the interaction potential $W$
satisfying \eqref{local-assum}, the same results of convergence in
Theorem \ref{main-thm} and Corollary \ref{coroll-2} can be
obtained. We leave the details to the reader.


\section{Local existence and uniqueness of
$L^p$-solutions}\label{sec-exist} In this section, we provide a
local existence and uniqueness result of weak solutions in
$L^p$-spaces to the system \eqref{conti-agg} under the assumptions
\eqref{main-assum-1}.

As we mentioned before, we can not directly apply the arguments in
\cite{B-L-R} for the potentials satisfying \eqref{main-assum-1}.
Of course, we can overcome these difficulties using the property
of compact supports on the initial data $\rho^0$ (see the
paragraph below Corollary \ref{coroll-2}). However, we use the
arguments of dividing near- and far-field parts of the interaction
potential function $W$ to establish the local existence of a
unique $L^p$-solution to the continuity aggregation equation
\eqref{conti-agg}.

\begin{theorem}\label{BLRlike-thm}
Assume that $W$ satisfies the condition~\eqref{main-assum-1}, for
some $0\leq\alpha < \frac d {p'} -1$, and that $\rho^0 \in
\mathcal{P}_1(\R^d) \cap L^p(\R^d)$, $1<p\leq \infty$. Then there
exists a time $T > 0$, depending only on $\|\rho^0\|_{p}$ and
$\alpha$, and a unique nonnegative solution to~\eqref{conti-agg}
satisfying $\rho \in L^\infty(0,T;L^1 \cap L^p(\bbr^d))\cap
\mathcal{C}([0,T],\mathcal{P}_1(\R^d))$. Furthermore, the solution
satisfies that there exists $C>0$ depending only on
$\|\rho^0\|_{p}$ and $\alpha$ such that
\begin{equation}\label{boundlp}
\|\rho(t)\|_p\leq C\,\qquad \mbox{for all } t\in[0,T].
\end{equation}
The velocity field generated by $\rho$, given by $u=-\nabla W\ast
\rho$, is bounded and Lipschitz continuous in space uniformly on $[0,T]$, and
$\rho$ is determined as the push-forward of the initial density through
the flow map generated by $u$.

Moreover, if  $\rho_i$, $i=1,2$, are two such solutions
to~\eqref{conti-agg} with initial conditions $\rho_i^0 \in
\mathcal{P}_1(\bbr^d) \cap L^p(\bbr^d)$, $1<p \leq \infty$, we
have the following stability estimate:
\[
\frac{d}{dt} d_1(t) \leq C \max( \|\rho_1\|,\|\rho_2\| ) d_1(t),
\]
where $d_1(t) := d_1(\rho_1(t),\rho_2(t))$.
\end{theorem}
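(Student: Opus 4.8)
The plan is to establish local existence by a fixed-point/approximation scheme and then derive uniqueness and stability from a Wasserstein contraction-type estimate. First I would regularize the problem: mollify the singular kernel $\nabla W$ (e.g. truncate the singularity at scale $\delta$, setting $\nabla W_\delta(x) = \nabla W(x)$ for $|x|\ge\delta$ and cutting it off smoothly inside), so that $\nabla W_\delta$ is globally Lipschitz and bounded. For each $\delta$ the regularized flow is well-defined by Cauchy--Lipschitz, and I would define $\rho_\delta$ as the push-forward of $\rho^0$ through this flow, $\rho_\delta(t) = \Psi_\delta(t;0,\cdot)\#\rho^0$. This automatically gives a nonnegative measure that conserves mass ($L^1$ norm equals $1$) and, since it stays in $\mathcal P_1$, has bounded first moment provided I control the velocity field.

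The crux is the uniform-in-$\delta$ bound \eqref{boundlp} on $\|\rho_\delta(t)\|_p$. The key mechanism is that the $L^p$ norm evolves along the flow according to $\frac{d}{dt}\|\rho_\delta(t)\|_p^p = (p-1)\int (\dv u_\delta)\,\rho_\delta^p\,dx$, so I need a bound on $\|\dv u_\delta\|_\infty = \|\dv(\nabla W_\delta * \rho_\delta)\|_\infty = \|D^2 W_\delta * \rho_\delta\|_\infty$. Here is exactly where assumptions \eqref{main-assum-1} enter: splitting the convolution into near- and far-field sets as in \eqref{nearfar}, the bound $|D^2 W(x)|\le C|x|^{-(1+\alpha)}$ together with $(1+\alpha)p' < d$ (guaranteed by $\alpha < d/p' - 1$) gives
\begin{equation*}
\|D^2 W_\delta * \rho_\delta\|_\infty \le C\big(\|\rho_\delta\|_1 + \|\rho_\delta\|_p\big) = C\|\rho_\delta\|,
\end{equation*}
uniformly in $\delta$. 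Feeding this into the $L^p$ evolution yields a closed differential inequality $\frac{d}{dt}\|\rho_\delta\|_p \le C\|\rho_\delta\|_p\,(1+\|\rho_\delta\|_p)$ (using $\|\rho_\delta\|_1 = 1$), whose solution stays bounded on a time interval $[0,T]$ with $T$ depending only on $\|\rho^0\|_p$ and $\alpha$. I expect this uniform $L^p$ bound to be the main obstacle, since it is what allows passage to the limit and what pins down the existence time.

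With the uniform bounds in hand I would extract a limit: the velocity fields $u_\delta$ are uniformly bounded and uniformly Lipschitz in space (by the estimate \eqref{nearfar} applied to $\nabla W_\delta$, exactly as in Step A of Theorem \ref{main-thm}), so the flows $\Psi_\delta$ are equicontinuous and converge, along a subsequence, locally uniformly to a limit flow $\Psi$. The densities $\rho_\delta$ converge weakly-$*$ to some $\rho$ which inherits the bound \eqref{boundlp} by weak-$*$ lower semicontinuity and lies in $\mathcal C([0,T],\mathcal P_1(\R^d))$ by the first-moment control. One checks that $\rho = \Psi(t;0,\cdot)\#\rho^0$ solves \eqref{conti-agg} with $u = -\nabla W * \rho$, the singular convolution now being well-defined because $\rho\in L^p$ and $\nabla W \in \mathcal W^{1,p'}_{loc}$. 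Finally, for uniqueness and the stability estimate, I would run the $d_1$-version of the argument already carried out for $d_\infty$ in Step A: taking an optimal plan between $\rho_1(t)$ and $\rho_2(t)$ and differentiating, the growth of $d_1$ is controlled by the Lipschitz constant of the velocity field, which by \eqref{nearfar} is bounded by $C\max(\|\rho_1\|,\|\rho_2\|)$. This gives
\begin{equation*}
\frac{d}{dt}\,d_1(\rho_1(t),\rho_2(t)) \le C\max(\|\rho_1\|,\|\rho_2\|)\,d_1(\rho_1(t),\rho_2(t)),
\end{equation*}
and Gr\"onwall's lemma then forces $d_1\equiv 0$ when the initial data coincide, yielding uniqueness.
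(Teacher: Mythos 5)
Your proposal is correct and its overall architecture coincides with the paper's: regularize the kernel, solve the regularized problem by the classical Lipschitz theory, derive a uniform-in-regularization bound on $\|\rho\|_{L^1\cap L^p}$ that fixes the existence time, pass to the limit, and obtain uniqueness and stability by differentiating $d_1$ along an optimal plan. Your $L^p$ computation via $\tfrac{d}{dt}\|\rho_\delta\|_p^p=-(p-1)\int(\dv u_\delta)\rho_\delta^p\,dx$ together with the near/far splitting as in \eqref{nearfar} is precisely the mechanism behind the paper's \eqref{de-unif-bd}, which is only cited there from \cite{B-L-R}, and your $d_1$ contraction is the paper's uniqueness argument, resting on \eqref{key}, the push-forward identity $\mathcal{T}\#\rho_1=\rho_2$, and symmetrization (the quantity controlled is the double integral of $|\nabla W(\mathcal{T}(x)-\mathcal{T}(y))-\nabla W(x-y)|$ rather than literally a Lipschitz constant of one velocity field, but your reference to Step A shows you have the right estimate). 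The one genuine divergence is the limit passage. The paper proves a quantitative Cauchy estimate $\tfrac{d}{dt}d_1(\rho_\ep,\rho_{\ep'})\le C(d_1+\ep+\ep')$, which requires the kernel-comparison claim \eqref{useful-est}, and then verifies the weak formulation in the limit with a cutoff near the diagonal; you instead invoke Arzel\`a--Ascoli on the flows, which are indeed equicontinuous by the uniform boundedness and Lipschitz continuity of $u_\delta$. Your route is softer and avoids \eqref{useful-est}, but the step you label ``one checks'' is where the substantive work sits: to identify the limit flow as the flow of $u=-\nabla W*\rho$ you must show $u_\delta\to u$ locally uniformly, controlling both $(\nabla W_\delta-\nabla W)*\rho_\delta$ (small because the truncation error is supported in $B_\delta$ with vanishing $L^{p'}$ norm there, thanks to $(1+\alpha)p'<d$) and $\nabla W*(\rho_\delta-\rho)$ (which needs the near/far splitting again together with tightness from the uniform first moments, since weak-$*$ convergence alone does not pair with an unbounded kernel). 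This is achievable with your uniform bounds but must be written out; the paper's $d_1$-Cauchy route buys a convergence rate and avoids subsequence extraction at the price of the extra comparison lemma. One last caveat: defining $\rho_\delta$ as ``the push-forward of $\rho^0$ through the regularized flow'' is circular as stated, since the flow depends on $\rho_\delta$; you need the fixed-point (Dobrushin-type) well-posedness for the Lipschitz kernel, as the paper does by citing \cite{BH,dobru,L}.
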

\begin{proof}
Let us start by proving the uniqueness. Given two weak
solutions
$\rho_i \in L^\infty(0,T;L^1 \cap L^p(\bbr^d))\cap
\mathcal{C}([0,T],\mathcal{P}_1(\R^d))$, $i=1,2$, to the continuous
aggregation equations \eqref{conti-agg}, consider the two flow
maps $\Psi_i : \bbr_+ \times \bbr_+ \times \bbr^d \to \bbr^d$,
$i=1,2$, generated by the two velocity fields, i.e.,
\begin{equation*}
\left\{  \begin{array}{ll}
\displaystyle \frac{d}{dt} \left(\Psi_i(t;s,x)) = u_i(t;s,\Psi_i(t;s,x)\right), & \\[2mm]
\Psi_i(s;s,x) = x, &
\end{array} \right.
\end{equation*}
where $u_i := -\nabla W * \rho_i$, $t,s\in [0,T]$ and $x\in\R^d$.
We know that the solutions are constructed by
transporting the
initial measures through the velocity fields $\rho_i = \Psi_i \#
\rho_i^0$, $i=1,2$.

Let $\mathcal{T}^0$ be the optimal transportation between
$\rho_1(0)$ and $\rho_2(0)$ for the $d_1$-distance. Then we define
a transport (not necessarly optimal) between $\rho_1(t)$ and
$\rho_2(t)$ by
\[
 \mathcal{T}^t(x) =
\Psi_2(t;0,x) \circ \mathcal{T}^0(x) \circ \Psi_1(0;t,x), \qquad
\mathcal{T}^t \# \rho_1(t) = \rho_2(t),
\]
and $\frac{d}{dt}d_1(t) \leq Q(t)$, where $d_1(t) :=
d_1(\rho_1(t),\rho_2(t))$ and
\[
Q(t) := \int_{\bbr^d \times \bbr^d} |\nabla W(\mathcal{T}^t (x) -
\mathcal{T}^t (y)) - \nabla W(x-y)| \rho_1(t,x)\rho_1(t,y) dxdy\,,
\]
where we have used a similar argument as in Step A of the proof of
Theorem \ref{main-thm}. To simplify the notation, let us not make
explicit the dependence on time. Note by symmetry that
\begin{align*}
\begin{aligned}
Q(t) &\leq 4\int_{\bbr^d \times \bbr^d} \left(
\frac{|\mathcal{T}(x) - x|}{|\mathcal{T}(x) - \mathcal{T}
(y)|^{1+\alpha}} + \frac{|\mathcal{T}(x) -
x|}{|x-y|^{1+\alpha}}\right)\rho_1(x)\rho_1(y) dxdy\cr &:=
\mathcal{J}_1 + \mathcal{J}_2.
\end{aligned}
\end{align*}
Straightforward computation using the near- and far-field
decomposition as in \eqref{nearfar} shows that
\begin{align*}
\begin{aligned}
\mathcal{J}_1 &= 4\int_{\bbr^d} |\mathcal{T} (x) - x |\rho_1(x)
\left( \int_{\bbr^d} \frac{\rho_2(y)}{|\mathcal{T} (x) -
y|^{1+\alpha}} dy \right) dx \cr &\leq C \|\rho_2\| \int_{\bbr^d}
|\mathcal{T} (x) - x |\rho_1(x) dx =C \|\rho_2\| \,d_1(t).
\end{aligned}
\end{align*}
Similarly using again \eqref{nearfar}, we have $\mathcal{J}_2 \leq
C \|\rho_1\|d_1(t)$. It yields that
\[
\frac{d}{dt}d_1(t) \leq C\max(\|\rho_1\|, \|\rho_2\|)\,d_1(t)\,,
\]
from which we conclude the uniqueness part of the statement.

\medskip

Let us now show the existence of weak solution. Let $\ep > 0$ and
$\theta$ be a standard mollifier:
\[
\theta \geq 0, \quad \theta \in \mathcal{C}_0^{\infty}(\R^d), \quad \mbox{supp }\theta \subset B(0,1), \quad \int_{\R^d} \theta(x) dx = 1,
\]
and we set a sequence of smooth mollifiers:
\[
\theta_{\ep}(x) := \frac{1}{\ep^d}\theta \left( \frac{x}{\ep}\right).
\]
We first regularize $\nabla W$ such as $\nabla W_{\ep} := (\nabla
W) * \theta_{\ep}$. Then since $\nabla W_{\ep}$ is a globally
Lipschitz, we can apply the theory of \cite{BH,dobru,L} which says
that there exists a unique global solution $\rho_{\ep}$ to the
following system
\begin{equation}\label{regularprob}
\left\{ \begin{array}{ll}
\partial_t \rho_{\ep} + \nabla \cdot (\rho_{\ep} u_{\ep}) = 0, & \qquad t > 0, \quad x \in
\bbr^d,\\[2mm]
u_{\ep}(t,x) := - \nabla W_{\ep} * \rho_{\ep}, & \qquad t > 0, \quad x \in
\bbr^d,\\[2mm]
\rho_{\ep}(0,x) := \rho^0(x), & \qquad x \in \bbr^d,
\end{array} \right.
\end{equation}
A standard calculation, see \cite{B-L-R}, implies that
\begin{equation}\label{de-unif-bd}
\frac{d}{dt}\|\rho_{\ep}\|_{L^1 \cap L^p} \leq C \|\rho_{\ep}\|^2_{L^1 \cap
L^p},
\end{equation}
where $C$ is an uniform constant in $\ep$. Note that the
inequality \eqref{de-unif-bd} holds only formally for the non
regularized problem, but it is fully rigorous for the regularized
one with $W_{\ep}$. This yields that the time of blow-up depends
only on the initial data, more precisely $\| \rho^0\|$, and not on
$\ep$. Thus, there exists a $T>0$ such that
\begin{equation}\label{unif-bdd}
\sup_{\ep > 0} \|\rho_{\ep}\| < \infty.
\end{equation}
It follows from \eqref{unif-bdd} and the evolution in time of the
first momentum of $\rho$, that this first moment is also uniformly
bounded:
\[
\sup_{\ep > 0}\| x \rho_{\ep}\|_{L^{\infty}(0,T;L^1(\R^d))} \leq C,
\]
where $C$ depends only on $T, \|x \rho^0\|_1$, and $\|\rho^0\|$.
We leave the details to the reader. Next, we show an estimate on
the growth of the $d_1$ distance $\eta_{\ep,\ep'}(t) :=
d_1(\rho_{\ep}(t),\rho_{\ep'}(t))$ between $\rho_{\ep}$ and
$\rho_{\ep'}$, for $\ep,\ep' > 0$:
\begin{equation}\label{reg-d-1-est}
\frac{d}{dt}\eta_{\ep,\ep'}(t) \leq C\max( \|\rho_{\ep}\|,
\|\rho_{\ep'}\|)\left(\eta_{\ep,\ep'}(t) + \ep + \ep'
\right),
\end{equation}
where $C$ is an uniform constant in $\ep$ and $\ep'$. We remark
that the above estimate \eqref{reg-d-1-est} implies that
$\{\rho_{\ep}\}_{\ep>0}$ is a Cauchy sequence in
$\mathcal{C}([0,T],\mathcal{P}_1(\R^d))$.

Let us remark that the weak solutions to the regularized problems
\eqref{regularprob} can be written in terms of characteristics.
This is a consequence of the fact that the associated velocity
field $u_\epsilon$ is bounded and Lipschitz in space, unifromly in time  and
some standard duality arguments. This strategy is explained in detail
at the end of the proof of the present Theorem applied to the
solution of the original problem, and we refer the reader there
for details. Since solutions are constructed by characteristics,
for the proof of \eqref{reg-d-1-est} we can proceed as in the part
of uniqueness. Therefore, not making explicit the time dependency,
we get
\begin{align}
\frac{d}{dt}\eta_{\ep,\ep'}(t) &\leq \int_{\R^d \times \R^d}
|\nabla W_{\ep}(\mathcal{T} (x) - \mathcal{T} (y)) - \nabla
W_{\ep'}(x-y)|\,\rho_{\ep'}(x)\rho_{\ep'}(y) dx dy\nonumber\\
&\leq \int_{\R^d \times \R^d} |\nabla W_{\ep}(\mathcal{T} (x) -
\mathcal{T} (y)) - \nabla
W_{\ep}(x-y)|\, \rho_{\ep'}(x)\rho_{\ep'}(y) dx dy \nonumber\\
&\quad + \int_{\R^d \times \R^d} |\nabla W_{\ep}(x-y) - \nabla
W_{\ep'}(x-y)|\,\rho_{\ep'}(x)\rho_{\ep'}(y) dx dy\nonumber\\
&:=\mathcal{K}_1 + \mathcal{K}_2, \label{reg-d-1-est-1}
\end{align}
where $\mathcal{T}$ is the optimal transportation between
$\rho_{\ep'}(t)$ and $\rho_{\ep}(t)$ for the $d_1$-distance. To
estimate $\mathcal{K}_1$, we notice that
\begin{align}
|\nabla W_\ep(x)| &\leq \int_{\left\{y: |y| <
\frac{|x|}{2}\right\}} \frac{\theta_{\ep}(y)}{|x-y|^{1+\alpha}} dy
+ \int_{\left\{y: |y| \geq \frac{|x|}{2}\right\}} \frac{
\theta_{\ep}(y)}{|x-y|^{1+\alpha}}  dy \nonumber
\\ &\leq
\frac{2^{1+\alpha}}{|x|^{1+\alpha}}\int_{\R^d}\theta_{\ep}(y)dy +
\mathbf{1}_{\{|x| \leq 2\ep\}}\int_{\left\{y:~ \ep \geq |y|
\right\}}\frac{\theta_{\ep}(y)}{|x-y|^{1+\alpha}}dy \nonumber
\\&\leq \frac{C}{|x|^{1+\alpha}} +
\frac{C\ep^{1+\alpha}}{|x|^{1+\alpha}}\int_{\left\{ y:~ \ep \geq
|y| \right\}}\frac{\theta_{\ep}(y)}{|x-y|^{1+\alpha}} dy \leq
\frac{C}{|x|^{1+\alpha}}. \label{reg-d-1-est-111}
\end{align}
Then we now use again the decomposition
\eqref{nearfar} as in the part of uniqueness to find
\begin{equation}\label{reg-d-1-est-2}
\mathcal{K}_1 \leq C\max(\|\rho_{\ep}\|, \|\rho_{\ep'}\| )\,
\eta_{\ep,\ep'}(t)\,,
\end{equation}
where $C$, $\|\rho_{\ep}\|$, and $\|\rho_{\ep'}\|$ are uniformly
bounded in $\ep$ and $\ep'$ thanks to the estimate
\eqref{unif-bdd}. For the estimate of $\mathcal{K}_2$, we claim
that
\begin{equation}\label{useful-est}
|\nabla (W - W_{\ep})(x)| \leq \frac{C\ep}{|x|^{1+\alpha}},
\end{equation}
where $C$ is independent on $\ep$.

\textit{Proof of Claim}: It is a straightforward to obtain
\begin{align}\label{useful-est-1}
\begin{aligned}
|\nabla W_{\ep}(x) - \nabla W (x)| &\leq \int_{\R^d} |\nabla
W(x-y) - \nabla W(x)|\theta_{\ep}(y) dy \cr &\leq 2 \int_{\R^d}
\left( \frac{1}{|x|^{1+\alpha}} + \frac{1}{|x-y|^{1+\alpha}}
\right) |y| \theta_{\ep}(y) dy \cr & := \mathcal{L}_1 +
\mathcal{L}_2\,.
\end{aligned}
\end{align}
Noticing that the mollifier properties allow to gain an $\ep$
factor in front of the integrals, we can estimate
$\mathcal{L}_i,i=1,2$ as follows
\begin{align}\label{useful-est-2}
\begin{aligned}
\mathcal{L}_1 &\leq \frac{C\ep}{|x|^{1+\alpha}}\int_{\R^d}
\theta_{\ep}(y) dy = \frac{C\ep}{|x|^{1+\alpha}}, \cr
\mathcal{L}_2 &\leq 2\ep\int_{\R^d}\frac{\theta_{\ep}(y)}{|x-y|^{1+\alpha}}dy \leq
\frac{C\ep}{|x|^{1+\alpha}},
\end{aligned}
\end{align}
where we used a similar argument to \eqref{reg-d-1-est-111} for
$\mathcal{L}_2$. We now combine \eqref{useful-est-1} and
\eqref{useful-est-2} to have the inequality \eqref{useful-est}.
Then we use \eqref{useful-est} together with \eqref{nearfar} to
find the estimate of $\mathcal{K}_2$
\begin{equation}\label{reg-d-1-est-3}
\mathcal{K}_2 \leq C(\ep + \ep')\int_{\R^d \times \R^d}
\frac{\rho_{\ep'}(t,x)\rho_{\ep'}(t,y)}{|x-y|^{1+\alpha}}  dx dy
\leq C(\ep + \ep')\|\rho_{\ep'}\|\,.
\end{equation}
This completes the proof of the inequality \eqref{reg-d-1-est} by
combining \eqref{reg-d-1-est}, \eqref{reg-d-1-est-1},
\eqref{reg-d-1-est-2}, and \eqref{reg-d-1-est-3}.

Since $\rho_{\ep}$ is a Cauchy sequence in
$\mathcal{C}([0,T],\mathcal{P}_1(\R^d))$, it converges toward a
limit curve of measures $\rho \in
\mathcal{C}([0,T],\mathcal{P}_1(\R^d))$, and we also have $\rho
\in L^{\infty}(0,T;L^1 \cap L^p(\R^d))$ from the uniform bounded
estimate \eqref{unif-bdd}. It remains to show that $\rho$ is a
solution of the aggregation equations \eqref{conti-agg}. Choose a
test function $\phi(t,x) \in \mathcal{C}_c^{\infty}([0,T]\times
\R^d)$, then $\rho_{\ep}$ satisfies
\begin{align}\label{reg-limit}
\int_{\R^d} \!\!\!\rho^0(x) \phi^0(x) dx =& \int_{\R^d}
\!\!\!\rho_\ep(T,x) \phi(T,x) dx +
\!\!\int_0^T\!\!\!\!\int_{\R^d}\!\! \rho_{\ep}(t,x)\partial_t
\phi(t,x) dx dt \\ & - \!\!\int_0^T\!\!\!\int_{\R^d}\!\int_{\R^d}
\rho_{\ep}(t,x)\rho_{\ep}(t,y)\nabla W_{\ep}(x-y)\cdot \nabla
\phi(t,x) dx dy dt.\nonumber
\end{align}
The first two terms in the rhs of \eqref{reg-limit} converges to
\[
\int_{\R^d} \rho(T,x) \phi(T,x) dx + \int_0^T\int_{\R^d}
\rho(t,x)\partial_t \phi(t,x) dx dt,
\]
since $\rho_{\ep} \to \rho$ in $\mathcal{C}([0,T],\mathcal{P}_1(\R^d))$. For
the third term in the rhs of \eqref{reg-limit}, we use the
estimates \eqref{useful-est} and \eqref{unif-bdd} to find
\begin{align*}
\left|\int_0^T\!\!\!\!\int_{\R^d}\!\int_{\R^d}\!\!
\rho_{\ep}(t,x)\rho_{\ep}(t,y)\left(\nabla W_{\ep}(x-y)-\nabla
W(x-y)\right)\cdot \nabla \phi(t,x) dx dy dt \right| \to 0,
\end{align*}
as $\ep \to 0$. It remains to show that
\begin{align*}
\begin{aligned}
&\int_0^T\int_{\R^d}\int_{\R^d}
\rho_{\ep}(t,x)\rho_{\ep}(t,y)\nabla W(x-y)\cdot \nabla \phi(t,x)
dx dy dt \cr & \qquad\qquad \qquad \to
\int_0^T\int_{\R^d}\int_{\R^d} \rho(t,x)\rho(t,y)\nabla
W(x-y)\cdot \nabla \phi(t,x) dx dy dt,
\end{aligned}
\end{align*}
as $\ep \to 0$. For this, we introduce a cut-off function
$\chi_{\delta} \in \mathcal{C}_c^{\infty}(\R)$ such that
\[
\chi_{\delta}(x) = \left\{  \begin{array}{ll}
1 & \mbox{if} \quad |x|\leq \delta \\
0 & \mbox{if} \quad |x|\geq 2\delta
\end{array} \right.\,.
\]
Then it follows from the weak convergence that \begin{align*}
\begin{aligned}
&\int_0^T\!\!\int_{\R^d}\!\int_{\R^d}\!\!
\rho_{\ep}(t,x)\rho_{\ep}(t,y)(1 - \chi_{\delta}(x-y))\nabla
W(x-y)\cdot \nabla \phi(t,x) dx dy dt \cr & \qquad \to
\int_0^T\!\!\int_{\R^d}\!\int_{\R^d}\!\! \rho(t,x)\rho(t,y)(1 -
\chi_{\delta}(x-y))\nabla W(x-y)\cdot \nabla \phi(t,x) dx dy dt,
\end{aligned}
\end{align*}
as $\ep \to 0$, since $(1 - \chi_{\delta}(x-y))\nabla W(x-y)\cdot
\nabla \phi(t,x)$ is a Lipschitz function. We estimate the
remainder as follows:
\begin{align*}
\begin{aligned}
&\left|\int_0^T\!\!\int_{\R^d}\!\int_{\R^d}\!\!
\rho_{\ep}(t,x)\rho_{\ep}(t,y)\chi_{\delta}(x-y))\nabla
W(x-y)\cdot \nabla \phi(t,x) dx dy dt\right| \cr & \qquad \quad
\leq C\delta\int_0^T\int_{\{(x,y)\in \R^d \times \R^d :~|x-y|\leq
2\delta\}} \frac{1}{|x-y|^{1+\alpha}}
\rho_{\ep}(t,x)\rho_{\ep}(t,y) dx dy dt \cr & \qquad \quad \leq
CT\delta\|\rho_{\ep}\| \to 0 \quad \mbox{as} \quad \delta \to 0.
\end{aligned}
\end{align*}
Similarly, we have
\[
\lim_{\delta\to 0}\left|\int_0^T\int_{\R^d}\int_{\R^d}
\rho(t,x)\rho(t,y)\chi_{\delta}(x-y)\nabla W(x-y)\cdot \nabla
\phi(t,x) dx dy dt\right| = 0.
\]
Hence, we conclude that $\rho$ satisfies
\begin{align}\label{ws}
\int_{\R^d} \!\!\!\rho^0(x) \phi^0(x) dx &= \int_{\R^d}\!\!\!
\rho(T,x) \phi(T,x) dx + \!\int_0^T\!\!\!\int_{\R^d}
\!\!\!\rho(t,x)\partial_t \phi(t,x) dx dt \\ &\quad -
\int_0^T\!\!\!\int_{\R^d}\int_{\R^d}\!\! \rho(t,x)\rho(t,y)\nabla
W(x-y)\cdot \nabla \phi(t,x) dx dy dt,\nonumber
\end{align}
for all $\phi \in \mathcal{C}_c^{\infty}([0,T]\times \R^d)$.

\medskip
Now, We notice that a weak solution in $\rho \in L^\infty(0,T;L^1
\cap L^p(\bbr^d))$ to \eqref{conti-agg} under the assumptions
\eqref{main-assum-1} has a well defined flow by using the same
arguments as the ones at the beginning of Theorem \ref{main-thm}.
In fact, the velocity field is bounded and Lipschitz continuous in space
 with
$$
|u(t,x)-u(t,y)|\leq C \|\rho\| |x-y|
$$
for all $x,y\in\bbr^d$ and $t \in[0,T]$. Thus, the flow map
\begin{equation*}
\left\{ \begin{array}{ll} \displaystyle \frac{d}{dt} (\Psi(t;s,x))
= u(t;s,\Psi(t;s,x)), &
\\[2mm]
\Psi(s;s,x) = x, &
\end{array} \right.
\end{equation*}
for all $s,t \in [0,T]$ is well-defined. Choosing as test function
in \eqref{ws} $\phi(t,x)=\varphi(\Psi(t;\bar{T},x))$ for any $\bar{T} \in (0,T]$ with $\varphi \in
\mathcal{C}_c^{\infty}(\R^d)$, it is a straightforward to check, due
to the definition of the flow map, that
$$
\int_{\R^d} \rho^0(x) \varphi(\Psi(0;\bar{T},x)) dx = \int_{\R^d}
\rho(\bar{T},x)\varphi(x) dx,
$$
for all $\varphi \in \mathcal{C}_c^{\infty}(\R^d)$, and thus by a
density argument we conclude $\rho(\bar{T}) = \Psi(\bar{T};0,\cdot ~ ) \#
\rho^0$. Since this argument can be done for all $0<\bar{T}\leq
T$, this completes the proof.
\end{proof}

%
%
%
%
\section{Propagation of chaos}\label{sec-propa}

In most practical purposes to approximate the continuum model by
particle systems, it is naturally expected that initial positions
and velocities will randomly and independently be selected. We
will show that the empirical measure at time $0$ is then close to
$\rho^0$ with large probability in suitable weak norm.

In a seminal article \cite{Kac}, the propagation of chaos was
introduced by Kac giving a proof for a simplified collision
evolution process. He showed how the limit of many particles
rigorously follows from the property of propagation of chaos. For
a classical introduction to these topics, we refer to \cite{Szn}.
Later, this property has been studied and developed in kinetic
theory, \cite{McK,McK2,G-M,Hau-Mis,M-M}.

Let us introduce the notion of propagation of chaos. Let us
consider $\rho^N(t,x_1,\cdots, x_N)$ being the image by the
dynamics to the coupled system \eqref{disconti-agg} with $N$-equal
masses particles of the initial law $(\rho^0)^{\otimes N}$. We
define the $k$-marginals as follows.
\[
\rho_k^N(t,x_1,\cdots,x_k) := \int_{\bbr^{d(N-k)}}
\rho_N(t,x)dx_{k+1}\cdots,dx_{N}.
\]
Let us choose the initial positions $X^{N,0}:=\{X^0_i\}_{i=1}^N$
as independent identically distributed random variables (in short
iid) with law $\rho^0$. We can construct the associated empirical
measure as in \eqref{dirac-sol-dis} by
$$
\mu_N(t) = \frac1N \sum_{i=1}^{N} \delta_{X_i(t)}\,,
$$
but now understood as a random variable with values in the space
of probability measures.

The propagation of chaos property is defined as follows: for any
fixed $k\in\N$,
\begin{equation*}
\rho_k^N \rightharpoonup (\rho)^{\otimes k} \quad \mbox{weakly-$*$
as measures as} \quad N \to \infty.
\end{equation*}
It is classically known \cite{Szn} that it is sufficient to check
this property for $k=2$ to derive the propagation of chaos. In
fact, this is based on the fact that propagation of chaos is
equivalent to show that the empirical measures $\mu_N(t)$ converge
in law towards the constant random variable $\rho(t)$.

Theorem \ref{chaos-thm} gives a quantified version of the
convergence in probability of $\mu_N(t)$ towards $\rho(t)$. We
refer to \cite{Hau-Mis,M-M} for a detailed explanation of the
quantified equivalence relations. The propagation of chaos for the
Vlasov-Poisson equations with singular force has recently been
investigated in \cite{Hau-Ja}. Here, we are only able to provide
such a result in a more restrictive setting that in the previous
section. Namely, we only show the propagation of chaos for $d\ge
3$ and with a more restrictive condition on the allowed
singularities $\alpha\geq 0$ depending on the regularity of the
initial data $1<p< \infty$.

\begin{theorem}\label{chaos-thm}
Given $\rho(t) \in L^{\infty}(0,T;(L^1 \cap L^p)(\bbr^d))\cap
\mathcal{C}([0,T],\mathcal{P}_1(\R^d))$ the unique solution to
\eqref{conti-agg} with initial data $\rho^0 \in
\mathcal{P}_1(\R^d) \cap L^p(\R^d)$, $1<p\leq\infty$, up to time
$T>0$. Assume that $\rho^0$ has compact support, that the initial
positions $X^{N,0}:=\{X^0_i\}_{i=1}^N$ are iid with law $\rho^0$,
and that
$$
(1+ \alpha)  p' < \frac{p-1}{2p-1} d\,,
$$
with $\alpha\geq 0$. Then the propagation of chaos holds in the
sense that
$$
\PP \left( \sup_{t \in [0,T]} d_1( \mu_N(t), \rho(t)) \geq
\frac{C}{N^{\gamma/d}} \right) \rightarrow 0 , \quad \text{as } \;
N \rightarrow + \infty,
$$
where $\gamma$ is a positive constant satisfying
\[
\frac{p'(2p-1)(1+\alpha)}{d(p-1)} < \gamma < 1.
\]
\end{theorem}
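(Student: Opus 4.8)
The plan is to reduce the probabilistic statement to the \emph{deterministic} stability estimate already obtained in the proof of Theorem \ref{main-thm}, and then to show that the iid initial cloud $\{X_i^0\}$ lands in the ``good'' regime of that theorem with probability tending to one. Concretely, I would fix the threshold and introduce the good event
\[
G_N := \left\{ d_1(\mu_N^0,\rho^0)\le N^{-\gamma/d} \right\}\cap\left\{ \max_{i}\frac1N\sum_{j\ne i}\frac{1}{|X_i^0-X_j^0|^{1+\alpha}}\le \Lambda_N \right\},
\]
and prove two things: on $G_N$, a Gronwall argument along the lines of \eqref{step-c-1} forces $\sup_{t\le T} d_1(\mu_N(t),\rho(t))\le C\,N^{-\gamma/d}$ together with the absence of collisions up to time $T$; and $\PP(G_N^c)\to 0$. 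Since $d_1\le d_\infty$, the deterministic half can be run through Steps A--C, the only change being that the crude far-field bound $|\nabla W(\mathcal T(x)-\mathcal T(y))|\le \eta_m^{-\alpha}$ is replaced by the genuine particle distances encoded in the functional appearing in $G_N$.

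For the first probabilistic ingredient I would use that, for iid samples of a compactly supported $L^p$ density in dimension $d\ge 3$, the mean Wasserstein-$1$ error is of order $N^{-1/d}$, and a bounded-difference (McDiarmid) concentration inequality upgrades this to $\PP(d_1(\mu_N^0,\rho^0)\ge N^{-\gamma/d})\to 0$ for every $\gamma<1$. This is exactly why $\gamma<1$ is imposed: the strict inequality absorbs both the gap between the mean rate and a high-probability rate and, if one prefers to run the deterministic estimate in $d_\infty$, the extra logarithmic loss in the covering rate of iid points.

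The delicate ingredient is the control of the singular functional in $G_N$, and I expect this to be the main obstacle. The naive route of bounding the minimal inter-particle distance is too lossy: a union bound over pairs only yields $\PP(\eta_m^0<r)\lesssim N^2 r^{d/p'}$, hence $\eta_m^0\gtrsim N^{-2p'/d}$, which is strictly weaker than what the sharp exponent demands. Instead one must estimate the averaged quantity directly. The summands $|X_i^0-X_j^0|^{-(1+\alpha)}$ are heavy-tailed with tail index $d/((1+\alpha)p')$, so only finitely many moments exist; the hypothesis $(1+\alpha)p'<\frac{p-1}{2p-1}d$, i.e.\ $(1+\alpha)p'(1+p')<d$, is precisely the requirement that moments of order strictly above $1+p'$ be available. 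That is exactly the budget to bound each average by a Markov/moment inequality and then take a union bound over the $N$ centres, and it is the source of the factor $\frac{2p-1}{p-1}=1+p'$ in the lower restriction on $\gamma$. The difficulty is genuine: the summands are not sub-Gaussian, so one cannot invoke exponential concentration and must squeeze the estimate out of the few polynomial moments the dimensional condition allows.

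Finally I would propagate this concentration from $t=0$ to the whole interval. Because $u=-\nabla W*\rho$ is bounded and globally Lipschitz in space, uniformly on $[0,T]$ by Theorem \ref{BLRlike-thm}, its flow $\Psi$ is bi-Lipschitz with constants $e^{\pm C\|\rho\|t}$; hence the singular functional evaluated along the trajectories $X_i(t)=\Psi_N(t;0,X_i^0)$ stays comparable to its value at $t=0$, which gives the uniformity in $t$ needed for the $\sup_{t\in[0,T]}$. Combining the deterministic bound valid on $G_N$ with $\PP(G_N^c)\to0$ then yields the stated estimate, and the reduction of propagation of chaos to the convergence in probability of the empirical measures, recalled before the statement, closes the argument.
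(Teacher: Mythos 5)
Your high-level architecture (a deterministic stability estimate applied on a ``good'' event of initial configurations, plus probabilistic bounds showing the good event has probability tending to one) matches the paper's, and several of your observations are on target: the naive pairwise union bound for the minimum distance is indeed too lossy, the exponent $\frac{2p-1}{p-1}=1+p'$ is indeed the right numerology, and the concentration of $d_1(\mu_N^0,\rho^0)$ at rate $N^{-\gamma/d}$ is handled essentially as you say (the paper cites Boissard's bounds where you invoke McDiarmid). But there are two genuine gaps. First, you omit the central device of the proof: the ``blob'' approximation $\rho_N^0=\mu_N^0*\mathbf{1}_{B_\varepsilon}/|B_\varepsilon|$ with $\varepsilon=N^{-\gamma/d}$ and its associated $L^p$ solution $\rho_N(t)$. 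The deterministic machinery of Theorem \ref{main-thm} is a $d_\infty$ estimate between an empirical measure and an $L^p$ density, and it cannot be run against $\rho$ directly on your event $G_N$, which only controls $d_1(\mu_N^0,\rho^0)$; the inequality $d_1\le d_\infty$ goes the wrong way, and $d_\infty(\mu_N^0,\rho^0)$ for iid samples is not available at rate $N^{-\gamma/d}$. The blob makes $d_\infty(\mu_N^0,\rho_N^0)\le\varepsilon$ automatic, so that Proposition \ref{prop-chao} controls $d_\infty(\mu_N(t),\rho_N(t))$, while the remaining gap $d_1(\rho_N(t),\rho(t))$ is closed by the $d_1$-stability between two $L^p$ solutions from Theorem \ref{BLRlike-thm}. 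Without this intermediate object you also lose the need for, and the content of, Proposition \ref{prop:normLp} (the large-deviation bound on $\|\rho_N^0\|_p$), which is what licenses a uniform-in-$N$ existence time and $L^p$ bound for $\rho_N$.

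Second, your replacement of $\eta_m$ by the averaged functional $\max_i\frac1N\sum_{j\ne i}|X_i^0-X_j^0|^{-(1+\alpha)}$ does not come with a closed deterministic scheme. The paper's argument closes because $(\eta,\eta_m)$ satisfies the coupled differential inequalities \eqref{step-c-1}; an averaged singular functional would need its own evolution inequality along the \emph{particle} flow, and your proposed time-propagation via the bi-Lipschitz flow of $u=-\nabla W*\rho$ is circular: the particles move under $u_N=-\nabla W*\mu_N$, whose Lipschitz constant at the particle locations is precisely the quantity you are trying to control, not under the regular continuum field $u$. Moreover the ``must estimate the averaged quantity'' claim is not needed: the paper does control the minimum distance, but via the sequential conditioning and H\"older argument of Proposition \ref{prop:mindist}, in which the $k$-th point avoids the union $A_k$ of $k-1$ balls and $\int_{A_k}\rho^0\le\|\rho^0\|_p(c_d(k-1)r^d)^{1/p'}$; summing $k^{1/p'}$ yields the exponent $\frac{2p-1}{d(p-1)}=\frac{1+p'}{d}$ directly, which is exactly compatible with the requirement $\eta_m^0\ge\varepsilon^r$, $r<\frac{d}{p'(1+\alpha)}$, and produces the stated lower bound on $\gamma$. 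So the refined minimum-distance estimate suffices, and your heavier moment-based route for the averaged functional is neither carried out nor necessary.
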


\begin{remark}
The condition on $\alpha$ gets more and more restrictive as $p$
gets smaller and smaller. In $d=2$, even for $p=\infty$ the
condition is empty for $\alpha \geq 0$. In $d=3$, you get the
condition $\alpha < 1/2$ for $p=\infty$ and with $p=\frac{5 +
\sqrt{13}}2$ the condition is already empty. We also notice that
the existence and uniqueness of the solutions are guaranteed by
Theorem \ref{BLRlike-thm} and taking expectations in the
corresponding inequalities for the particle system. Finally, in
case $-1 \le \alpha <0$, the propagation of chaos holds using the
same strategy as in Corollary \ref{corol-1} by taking expectations
in the inequalities for the evolution of the Wasserstein distance.
\end{remark}

We will follow the strategy recently introduced in \cite{Hau-Ja}
for the Vlasov equation. We first find a deterministic version of
the propagation of chaos. This means that we consider a
regularized system of particles as a kind of middle ground between
the solution of the mean-field equation \eqref{conti-agg} and the
random particle evolution. More precisely, we define the ``blob''
initial data $\rho_{N}^0$ as
\begin{equation} \label{def:rhoNep} \rho_{N}^0 := \mu_N^0 *
\frac{\mathbf{1}_{B_{\varepsilon}(0)}}{|B_{\varepsilon}(0)|}=
\frac{1}{c_d \varepsilon^d}\left(\mu_N^0 *
\mathbf{1}_{B_{\varepsilon}(0)}\right)\,,
\end{equation}
where $\varepsilon>0$ to be chosen as a function of the number of
particles $N$ and $c_d$ is the volume of the unit ball in
dimension $d$. We also define the ``blob'' approximation
$\rho_{N}(t)$ to be the solution of the system \eqref{conti-agg}
with the kernel $W$ satisfying \eqref{main-assum-1} given by
Theorem \ref{BLRlike-thm} and ``blob'' initial data $\rho_{N}^0$.

In the rest, $\varepsilon$ is chosen as a function of $N$ as
$\varepsilon(N) = N^{-\gamma/d}$ with $0 < \gamma < 1$. It is easy
to check that $\|\rho^0_{N}\|_p \simeq N^{(\gamma-1)/p'}$ for $N$
large enough, then we can wonder how far is the empirical measure
to its blob approximation if we assume a bound on
$\|\rho^0_{N}\|_p$ independent of $N$.

\begin{proposition}\label{prop-chao}
Under the assumptions of Theorem \ref{chaos-thm} and assuming that
there exists $C_1>0$ independent of the number of particles $N$
such that
\[
\|\rho^0_{N}\|_p \leq C_1, \quad \text{and} \quad \eta_m^0\geq
\frac 1{C_1} \varepsilon^{r},
\]
with $1 \leq r < \frac{d}{p^{\prime}(1+\alpha)}$. Then, there
exists $T>0$ such that the solutions $\rho_N(t)$ and the empirical
measure $\mu_N(t)$ are well-defined for all $t\in [0,T]$, and
\[
d_{\infty}(\rho_{N}(t),\mu_N(t)) \leq
d_{\infty}(\rho_{N}^0,\mu_N^0)e^{C_2T} \leq
\varepsilon(N)e^{C_2T},
\]
where $C_2>0$ is independent of $N$.
\end{proposition}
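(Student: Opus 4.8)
The plan is to mimic the three-step argument used in the proof of Theorem~\ref{main-thm}, but now comparing the blob solution $\rho_N(t)$ with the empirical measure $\mu_N(t)$ instead of a fixed continuum solution. Set $\zeta(t) := d_{\infty}(\rho_N(t),\mu_N(t))$ and let $\eta_m(t)$ denote the minimum inter-particle distance of the solution $\{X_i\}$ of \eqref{disconti-agg}, as in \eqref{notat-simp}. First I would record the initial bound: since $\rho_N^0 = \mu_N^0 * \mathbf{1}_{B_\varepsilon(0)}/|B_\varepsilon(0)|$ is obtained by spreading each Dirac mass of $\mu_N^0$ uniformly over a ball of radius $\varepsilon$, transporting each such ball back to its center moves mass by at most $\varepsilon$, so that $\zeta(0)=d_{\infty}(\rho_N^0,\mu_N^0)\le \varepsilon$. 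The hypothesis $\|\rho_N^0\|_p\le C_1$ together with Theorem~\ref{BLRlike-thm} then provides a local existence time $T>0$ and a bound $\|\rho_N\|\le C$, both depending only on $C_1$ and $\alpha$ and hence uniform in $N$; this is the key point that will keep the final constant $C_2$ independent of $N$.

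Next, because $\rho_N$ is an absolutely continuous $L^1\cap L^p$ solution of \eqref{conti-agg} and $\mu_N$ is the empirical measure of the particle system, Steps A and B of Theorem~\ref{main-thm} apply verbatim with $\rho$ replaced by $\rho_N$. This yields the coupled system
\[
\frac{d^+\zeta}{dt} \le C\,\zeta\,\|\rho_N\|\Bigl(1 + \zeta^{d/p^{\prime}}\eta_m^{-(1+\alpha)}\Bigr),\qquad
\frac{d\eta_m}{dt} \ge -C\,\eta_m\,\|\rho_N\|\Bigl(1 + \zeta^{d/p^{\prime}}\eta_m^{-(1+\alpha)}\Bigr),
\]
valid on $[0,\min(T,T_0^N))$, where $T_0^N$ is the first collision time. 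As before, the whole difficulty is that these inequalities only close once the correction factor $\zeta^{d/p^{\prime}}\eta_m^{-(1+\alpha)}$ is kept under control.

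Finally I would run the continuation argument of Step C. Introduce $f=\zeta/\zeta^0$, $g=\eta_m/\eta_m^0$ and the control parameter $\xi_N := (\zeta^0)^{d/p^{\prime}}(\eta_m^0)^{-(1+\alpha)}$. Using $\zeta^0\le\varepsilon$ and $\eta_m^0\ge \varepsilon^r/C_1$ one estimates
\[
\xi_N \le C_1^{1+\alpha}\,\varepsilon^{\,d/p^{\prime} - r(1+\alpha)},
\]
whose exponent is strictly positive precisely because $r<\frac{d}{p^{\prime}(1+\alpha)}$; since $\varepsilon=\varepsilon(N)=N^{-\gamma/d}\to0$, this forces $\xi_N\to0$. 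Exactly as in Step C, as long as $\xi_N f^{d/p^{\prime}}g^{-(1+\alpha)}\le1$ the two inequalities give $f(t)\le e^{2\|\rho_N\|t}$ and $g(t)\ge e^{-2\|\rho_N\|t}$, whence $\xi_N f^{d/p^{\prime}}g^{-(1+\alpha)}\le \xi_N e^{2(d/p^{\prime}+1+\alpha)\|\rho_N\|t}$; because $\xi_N\to0$, the set of times on which this stays below $1$ eventually covers all of $[0,T]$ for $N$ large, which simultaneously shows $T<T_0^N$ so that no collision occurs on $[0,T]$. On that interval the first inequality reduces to $\frac{d^+\zeta}{dt}\le 2\|\rho_N\|\,\zeta$, yielding
\[
\zeta(t)\le \zeta(0)\,e^{C_2 t}\le \varepsilon(N)\,e^{C_2 T},
\]
with $C_2$ a constant multiple of $\sup_N\|\rho_N\|$, hence independent of $N$ by the uniform bound from Theorem~\ref{BLRlike-thm}. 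The main obstacle, as in Theorem~\ref{main-thm}, is ensuring that every constant is uniform in $N$: the decay $\xi_N\to0$ (guaranteed by $r<\frac{d}{p^{\prime}(1+\alpha)}$) is what lets the continuation argument reach the fixed time $T$, while the $N$-independent bound $\|\rho_N\|\le C$ from the assumption $\|\rho_N^0\|_p\le C_1$ is what makes $C_2$ uniform.
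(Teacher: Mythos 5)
Your proposal is correct and follows essentially the same route as the paper: substitute $\rho_N$ for $\rho$ in Steps A and B of Theorem \ref{main-thm}, use the uniform bound $\|\rho_N\|\le C$ coming from $\|\rho_N^0\|_p\le C_1$ via Theorem \ref{BLRlike-thm} to get an $N$-independent existence time and constant, and close the continuation argument of Step C using $\xi_N\le C_1^{1+\alpha}\varepsilon^{d/p'-r(1+\alpha)}\to 0$, which is exactly the paper's observation that $(\eta_N^0)^{d/p'}(\eta_m^0)^{-(1+\alpha)}\le C\varepsilon^{d/p'-r(1+\alpha)}\to 0$. Your explicit justification of the initial bound $d_\infty(\rho_N^0,\mu_N^0)\le\varepsilon$ by transporting each $\varepsilon$-ball back to its center is a small welcome addition that the paper leaves implicit.
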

\begin{proof}
We follow a similar argument to Theorem \ref{main-thm}. We first
notice from Theorem \ref{BLRlike-thm} that there exists a common
time of existence $T>0$ of the solutions $\rho_N$ independent of
$N$ since it only depends on $\|\rho^0_{N}\|_p$ and $\alpha$. The
empirical measure also exists up to this time since it will be
smaller than the possible first collision time of particles.
Moreover, due to \eqref{boundlp}, we get that $\|\rho_N(t)\|_p
\leq C$, for all $t\in [0,T]$, where $C$ is independent of $N$. We
next substitute $\rho_{N}(t)$ for $\rho(t)$ in the proof of
Theorem \ref{main-thm}, and thus all estimates in Step A and B
hold to deduce
\begin{align*}
\begin{aligned}
    \frac{d\eta_N}{dt} &\leq C \eta_N \|\rho_{N}\| \left( 1 + \eta_N^{d/p^{\prime}} \eta^{-(1 + \alpha)}_m \right) \leq C\eta_N\left( 1 + \eta_N^{d/p^{\prime}} \eta^{-(1 + \alpha)}_m \right),
\end{aligned}
\end{align*}
and
\begin{align*}
\begin{aligned}
\frac{d\eta_m}{dt} &\geq - C \eta_m \|\rho_{N}\| \left( 1 +
\eta_N^{d/p^{\prime}} \eta_m^{-(1 + \alpha)}\right) \geq - C
\eta_m \left( 1 + \eta_N^{d/p^{\prime}} \eta_m^{-(1 +
\alpha)}\right),
\end{aligned}
\end{align*}
where $\eta_N(t) := d_{\infty}(\rho_{N}(t),\mu_N(t))$. Note that
the condition $r \geq 1$ makes sense since $\varepsilon \approx
\eta_N^0 \geq \eta_m^0 \geq C\varepsilon^{r}$ for $\varepsilon$
small enough. We finally conclude the desired result using a
similar argument as in Step C of the proof of Theorem
\ref{main-thm} since
\[
\frac{(\eta_N^0)^{d/p^{\prime}}}{(\eta_m^0)^{1+\alpha}} \leq
C\varepsilon^{d/p^{\prime} - r(1 + \alpha)} \to 0 \quad \mbox{as}
\quad N \to \infty,
\]
by assumption.
\end{proof}

We now present two propositions showing that the assumptions on
$\rho^0_{N}$ and $\eta_m^0$ in Proposition \ref{prop-chao} are
generic in a probability sense when the initial positions
$X^{N,0}$ are iid with law $\rho^0$ in $L^p$. We first prove in
Proposition \ref{prop:mindist} that $\eta_m^0$ is roughly larger
than $N^{- \frac{2p-1}{d(p-1)}}$ if the $X^{N,0}$ are iid with law
$\rho^0$.

\begin{proposition} \label{prop:mindist}
Let $\rho^0 \in \mathcal{P}_1(\R^d) \cap L^p(\R^d)$, $1<p
\leq\infty$, and the initial positions $X^{N,0}$ be iid with law
$\rho^0$. Suppose there exists $L > 0$ such that
\[
2 c_d^\frac{1}{p^{\prime}} \| \rho^0\|_p L^{\frac{d}{p^{\prime}}}
\le N\,,
\]
then $\eta_m^0$ satisfies
$$
\bbp \left( \eta_m^0 \ge L N^{- \frac{2p-1}{d(p-1)}} \right) \geq
e^{- 2c_d^\frac{1}{p^{\prime}} \|\rho^0\|_p
L^{\frac{d}{p^{\prime}}}}\,.
$$
\end{proposition}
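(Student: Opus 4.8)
Write $\delta := L\,N^{-(2p-1)/(d(p-1))}$ for the separation threshold and $\Lambda := c_d^{1/p'}\|\rho^0\|_p L^{d/p'}$ for the quantity in the exponent, so that the hypothesis reads $2\Lambda\le N$ and the goal becomes $\PP(\eta_m^0\ge\delta)\ge e^{-2\Lambda}$. By the definition of $\eta_m^0$ in \eqref{notat-simp}, the event $\{\eta_m^0\ge\delta\}$ is exactly the event that the iid points $X_1^0,\dots,X_N^0$ are pairwise at distance at least $\delta$. The plan is to estimate this probability by a greedy, sequential exclusion argument: reveal the points one at a time and note that, given $X_1^0,\dots,X_{j-1}^0$, the conditional probability that $X_j^0$ lands within distance $\delta$ of one of the earlier points equals the $\rho^0$-measure of the union $\bigcup_{i<j}B(X_i^0,\delta)$, by independence.

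The single quantitative input is a uniform bound on the mass $\rho^0$ can place on a small ball. H\"older's inequality with conjugate exponents $p,p'$ gives
\[
\rho^0(B(x,\delta)) = \int_{B(x,\delta)}\rho^0(y)\,dy \le \|\rho^0\|_p\,|B(x,\delta)|^{1/p'} = c_d^{1/p'}\|\rho^0\|_p\,\delta^{d/p'} =: a,
\]
uniformly in $x$ (for $p=\infty$, $p'=1$, this degenerates to the trivial $\rho^0(B(x,\delta))\le c_d\|\rho^0\|_\infty\delta^d$). A union bound over the at most $j-1$ previously placed balls then yields $\rho^0\bigl(\bigcup_{i<j}B(X_i^0,\delta)\bigr)\le (j-1)a$, so the conditional survival probability of the $j$-th point is at least the \emph{deterministic} number $1-(j-1)a$. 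Conditioning from the outside in (the tower property), these survival factors multiply and I obtain
\[
\PP(\eta_m^0\ge\delta) \ge \prod_{k=0}^{N-1}\bigl(1-k\,a\bigr).
\]

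Finally I convert this product into the stated exponential, and this is where the hypothesis $2\Lambda\le N$ does the essential work. It forces the accumulated excluded mass to stay small, $(N-1)a\le\tfrac12$, so that every factor $1-ka$ lies in $[\tfrac12,1]$ and the elementary inequality $\ln(1-x)\ge -2x$, valid for $0\le x\le\tfrac12$, applies termwise. Summing the resulting arithmetic series gives
\[
\PP(\eta_m^0\ge\delta)\ge \exp\!\Bigl(-2a\sum_{k=0}^{N-1}k\Bigr)=\exp\!\bigl(-a\,N(N-1)\bigr),
\]
and the exponent in the threshold $\delta$ is chosen precisely to balance this quadratic-in-$N$ quantity $a\,N(N-1)$ against $\Lambda$, producing the factor $e^{-2\Lambda}$. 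I expect the main obstacle to be exactly this positivity/balancing issue: the greedy bound is meaningful only while the cumulative excluded measure $(j-1)a$ stays below $1$ (and below $\tfrac12$ for the logarithmic step), which is what $2\Lambda\le N$ secures and what keeps the product from collapsing; moreover the bookkeeping that turns the conditional survival estimates into an honest product lower bound relies on these factors being nonnegative, so that iterating the one-step inequality does not reverse its direction. The only remaining care is the endpoint $p=\infty$, where the H\"older step is replaced by the trivial $L^\infty$ estimate and the rest of the argument is unchanged.
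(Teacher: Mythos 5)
Your overall architecture---revealing the points sequentially, lower-bounding the survival probability of the $j$-th point by one minus the $\rho^0$-mass of the union of exclusion balls, and converting the product to an exponential via $\ln(1-x)\ge -2x$---is exactly the paper's. The gap is in the single quantitative input. You apply H\"older to each ball separately and then take a union bound, getting $\rho^0\bigl(\bigcup_{i<j}B(X_i^0,\delta)\bigr)\le (j-1)a$ with $a=c_d^{1/p'}\|\rho^0\|_p\delta^{d/p'}$, i.e.\ an excluded mass growing \emph{linearly} in $j$. The paper instead applies H\"older once to the whole union $A_j=\bigcup_{i<j}B(X_i^0,\delta)$: since $|A_j|\le (j-1)c_d\delta^d$, one gets $\rho^0(A_j)\le\|\rho^0\|_p\bigl((j-1)c_d\delta^d\bigr)^{1/p'}=(j-1)^{1/p'}a$, which grows only like $j^{1/p'}$. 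This sublinearity (concavity of $|A|\mapsto|A|^{1/p'}$) is what makes the final sum close: $\sum_{k=1}^{N-1}k^{1/p'}\le N^{1+1/p'}$ and $N^{1+1/p'}\cdot N^{-2+1/p}=1$, yielding exactly the exponent $-2\Lambda$ with $\Lambda=c_d^{1/p'}\|\rho^0\|_pL^{d/p'}$. With your linear-in-$k$ bound the sum is $2a\sum_k k\sim aN^2=\Lambda N^{1/p}$, so your final estimate is $e^{-\Lambda N^{1/p}}$ up to constants, which for every finite $p$ is strictly weaker than the claimed $e^{-2\Lambda}$ and in fact tends to $0$ as $N\to\infty$ for fixed $L$. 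Your assertion that the threshold $\delta$ ``balances'' $aN(N-1)$ against $\Lambda$ is only true at $p=\infty$, where $p'=1$ and the two bounds coincide.

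There is a second, related problem: the positivity you need for the termwise logarithm, namely $(N-1)a\le\tfrac12$, amounts to $2\Lambda\le N^{1/p'}$, which is \emph{stronger} than the stated hypothesis $2\Lambda\le N$ whenever $p<\infty$; so the hypothesis does not, as you claim, secure that step in your version. Under the paper's bound the worst factor is $1-\Lambda N^{-2+1/p}(N-1)^{1/p'}\ge 1-\Lambda/N\ge\tfrac12$, which is precisely what the hypothesis $2\Lambda\le N$ was calibrated to provide. The fix is local: replace the per-ball H\"older plus union bound by H\"older applied directly to $A_j$; everything else in your argument then goes through verbatim.
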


\begin{proof}
Choose an $r \in \bbr_+$. Then $\eta_m^0 \ge r$ holds if
$$
X^0_k \in \bbr^d \bigl\backslash A_k\,, \qquad \mbox{with}\qquad
A_k=\bigcup_{1\leq i \le k-1} B(X^0_i,r)\,,
$$
for all $1 \le k \le N$. It implies from our assumption with $r =
L N^{- \frac{2p-1}{d(p-1)}}$ that
\begin{align*}
\bbp \left( \eta_m^0 \ge L N^{- \frac{2p-1}{d(p-1)}}\right) & \ge \prod_{k=1}^N \biggl[ 1 - \int_{A_k}\rho^0(x)\,dx \biggr] \\
& \ge \prod_{k=1}^{N-1} \biggl[ 1 - c_d^\frac{1}{p^{\prime}}
\|\rho^0\|_p L^{\frac{d}{p'}}N^{-2+\frac{1}{p}}
k^{\frac{1}{p^{\prime}}}\biggr] \,,
\end{align*}
and thus using that $\ln(1-x) \ge -2x$ if $x \in [0,\frac12]$, we
conclude
\begin{align*}
\ln \bbp ( \eta_m^0 \ge r) & \ge - 2 c_d^\frac{1}{p^{\prime}} \|
\rho^0\|_p L^{\frac{d}{p'}}N^{-2+\frac{1}{p}} \sum_{k=1}^{N-1}
k^{\frac{1}{p^{\prime}}}  \ge -2c_d^\frac{1}{p^{\prime}} \|
\rho^0\|_p L^{\frac{d}{p^{\prime}}}\,.
\end{align*}
\end{proof}

The next proposition gives some bound on the large deviation of
$\| \rho^0_{N}\|_p$. It states roughly that $\| \rho^0_{N}\|_p$ is
of the same order that $\| \rho^0 \|_p$, if the $X^{N,0}$ are iid
with law $\rho^0$.

\begin{proposition} \label{prop:normLp}
Let $\rho^0 \in \mathcal{P}_1(\R^d) \cap L^p(\R^d)$, $1<p
\leq\infty$, with compactly support included in $[-R,R]^d$. For
any iid $X^{N,0}$ with law $\rho^0$, the smoothed empirical
measures $\rho^0_{N}$ defined in~\eqref{def:rhoNep} satisfy the
explicit ``large deviations'' bound
\begin{equation*} 
\PP \bigl( L_d\| \rho^0 \|_p \le \| \rho^0_{N} \|_p \bigr) \le
[2(R+1)]^d N^\gamma e^{- c_R \| \rho \|_p N^{1-\gamma} },
\end{equation*}
where $L_d$ and $c_R$ are explicitly given by
\[
c_R := \frac{2\ln 2 }{\left[ 2(R+1) \right]^{\frac dp}} \quad
\mbox{and} \quad L_d := \frac{4 (4[[\sqrt{d}]]+1)^{d/p}}{c_d},
\]
with $[[\cdot]]$ denoting the integer part.
\end{proposition}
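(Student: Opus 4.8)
The plan is to turn the probability estimate into a union bound over a spatial grid, on which the blob density $\rho^0_N$ is dominated cell by cell by a binomial count of sample points, and then to apply a Chernoff tail bound to each count. First I would note that, since $\rho^0$ is supported in $[-R,R]^d$ and $\varepsilon=\varepsilon(N)\le 1$ for $N$ large, $\rho^0_N$ from \eqref{def:rhoNep} is supported in $[-(R+1),R+1]^d$. I cover this region by a grid of cubic cells $\{Q_j\}$ of side comparable to $\varepsilon=N^{-\gamma/d}$, so that the number of cells is of order $[2(R+1)]^d\varepsilon^{-d}=[2(R+1)]^dN^\gamma$, exactly the polynomial prefactor in the claim. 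For $x\in Q_j$ one has the pointwise domination
$$\rho^0_N(x)=\frac1{Nc_d\varepsilon^d}\,\#\{i:|X^0_i-x|\le\varepsilon\}\le\frac{Z_j}{Nc_d\varepsilon^d},$$
where $Z_j$ is the number of sample points lying within distance $\varepsilon$ of $Q_j$, i.e. in a mild enlargement $\tilde Q_j$. A ball of radius $\varepsilon$ meets only boundedly many cells, so the $\tilde Q_j$ have bounded overlap, the overlap constant being the geometric factor $4[[\sqrt d]]+1$ that enters $L_d$.

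Integrating the pointwise bound gives
$$\|\rho^0_N\|_p^p\le\sum_j\varepsilon^d\Big(\frac{Z_j}{Nc_d\varepsilon^d}\Big)^p.$$
Here $Z_j$ is a $\mathrm{Bin}(N,p_j)$ variable with $p_j=\int_{\tilde Q_j}\rho^0$ and mean $\EE Z_j=Np_j$. Hölder's inequality gives $p_j\le\|\rho^0\|_p\,|\tilde Q_j|^{1/p'}$, and the bounded overlap of the $\tilde Q_j$ converts $\sum_j p_j^p$ into a constant multiple of $\varepsilon^{d(p-1)}\|\rho^0\|_p^p$, after which the powers of $\varepsilon$ cancel against $(Nc_d\varepsilon^d)^p$. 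Consequently, on the event where in every cell $Z_j$ stays within a fixed multiple of its mean, the right-hand side is at most $(L_d\|\rho^0\|_p)^p$ with the $R$-independent constant $L_d=4(4[[\sqrt d]]+1)^{d/p}/c_d$. This is the deterministic half of the argument: the good event forces $\|\rho^0_N\|_p\le L_d\|\rho^0\|_p$.

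It then remains to bound the probability of the complementary event, on which some $Z_j$ overshoots its threshold. Each $Z_j=\sum_i\mathbf{1}_{\{X^0_i\in\tilde Q_j\}}$ is a sum of i.i.d. indicators, hence genuinely binomial, so a Chernoff bound yields $\PP(Z_j\ge t)\le e^{-c\,t}$ with a constant coming from the exponential moment of a Bernoulli variable — this is the source of the $\ln 2$, and hence the $2\ln 2$, in $c_R$. With a threshold of order $\|\rho^0\|_pN\varepsilon^d=\|\rho^0\|_pN^{1-\gamma}$ (up to the factor $[2(R+1)]^{d/p}$ absorbed into $c_R$), each cell contributes at most $e^{-c_R\|\rho^0\|_pN^{1-\gamma}}$, and a union bound over the $[2(R+1)]^dN^\gamma$ cells produces precisely the stated estimate. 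Note that although the $Z_j$ are dependent across cells, only the marginal tail bounds and a union bound are used.

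The delicate point, which I expect to be the main obstacle, is the calibration of the per-cell threshold. It must at once be tight enough that the deterministic $L^p$ sum on the good event yields the \emph{sharp}, $R$-independent constant $L_d$, and loose enough that the Chernoff estimate is uniform over cells whose expected occupancies $\EE Z_j=Np_j$ range from $O(\|\rho^0\|_pN^{1-\gamma})$ in the dense cells near the bulk of $\rho^0$ down to nearly zero in the sparse cells. The clean way to reconcile these is to compare $Z_j$ to a fixed multiple of its mean in the dense cells, while imposing a common absolute floor of order $\|\rho^0\|_pN^{1-\gamma}/[2(R+1)]^{d/p}$ in the many sparse cells; it is precisely this floor that carries the factor $[2(R+1)]^{-d/p}$ into $c_R$ while keeping it out of $L_d$. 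Controlling the accumulated $L^p$ contribution of the sparse cells without spoiling either $L_d$ or the exponent is the technical heart of the proof.
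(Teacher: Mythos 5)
Your proposal follows essentially the same route as the paper's proof: cover $[-(R+1),R+1]^d$ by roughly $[2(R+1)]^dN^{\gamma}$ cells of side $\varepsilon=N^{-\gamma/d}$, dominate $\rho^0_N$ cell by cell by binomial occupation counts of enlarged cells with overlap constant $(4[[\sqrt d\,]]+1)^d$, and combine a per-cell Chernoff bound (via the exponential moment of the binomial law) with a union bound over the cells. The ``delicate point'' you flag --- calibrating a two-part threshold that mixes a fixed multiple of the cell mean $Ns_k$ with an absolute floor of order $\|\rho^0\|_pN\varepsilon^d[2(R+1)]^{-d/p}$ --- is resolved in the paper exactly as you propose, by splitting $\sum_k N_k^p\ge\tfrac12(N\tilde L)^p\varepsilon^{dp/p'}\|\rho^0\|_p^pM^{-1}M+\tfrac12N^p\tilde L^p\sum_k s_k^p$ and extracting a single offending cell, which is where the factors $2\ln 2$ and $[2(R+1)]^{-d/p}$ in $c_R$ originate.
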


\begin{proof}
For any $X_i \in \R^{d}$ and $x \in \R^d$, we have
\[
 \rho^0_{N}(x)  =  \frac{1}{N\,c_d \,\e^d} \sum_{i=1}^{N} \mathbf{1}_{B_\e}(x - X_i) =  \frac 1{N\,c_d\,\ep^d}
 \# \{ i \text{ s.t. } |x - X_i| \leq   \e  \},
\]
where $\#$ stands for the cardinal (of a finite set). Next, we
cover $[-R,R]^d$ by $M$ disjoint cubes $C_k$ of size $\ep^d$,
centered at the points $(c_k)_{k \leq M}$. The number $M$ of
square needed depends on $N$ via $\ep$, and is bounded by
$$
M \leq \left[ \frac{2(R+1)}\e \right]^d.
$$
Assume that $x \in C_k$ for some $1\leq k \le M$, i.e., $|x - c_k|
\leq \frac{\sqrt{d}\ep}2$, then
$$
 \# \{ i \text{ s.t. } |x - X_i | \leq \e \} \leq \# \{ i \text{ s.t. } |c_k - X_i|_\infty \leq 2\sqrt{d} \e \},
$$
and for any $1<p<\infty$ we obtain
\begin{align*}
\int_{C_k} (\rho^0_{N}(x))^p \,dx &  \le
\frac{\ep^{d(1-p)}}{(N\,c_d)^{p}} \, \# \{ i \text{ s.t. } |c_k -
X_i|_\infty \leq  2\sqrt{d} \e  \}^p \\
& = \frac{\ep^{d(1-p)}}{(N\,c_d)^{p}} \, \# \{ i \text{ s.t. }
x\in C_k^d \}^p\,,
\end{align*}
where $C_k^d$ denotes the cube of center $c_k$ and size
$(4\sqrt{d} \e)^d$. Let us consider the set of cubes of the
lattice that contains $C_k^d$, i.e.,
$$
C_k^d \subset \bigcup_{j\in I_k} C_j
$$
where $I_k=\{j \mbox{ such that } C_k^d\cap C_j\neq \emptyset\}$.
It is direct to check that $\#I_k\leq M_d$ with
$M_d=(4[[\sqrt{d}]]+1)^d$. Moreover, there are only $M_d$ possible
values of $1\leq k\leq M$ such that $j\in I_k$ for a given $1\leq
j\leq M$. This yields
\begin{align}
\int_{\bbr^d} (\rho^0_{N}(x))^p dx &\leq
\frac{\ep^{d(1-p)}}{(N\,c_d)^{p}} \, \sum_{k=1}^M \sum_{j\in I_k}
\# \{ i \text{ s.t. } x\in C_j \}^p\nonumber\\ &\leq
\frac{M_d\ep^{d(1-p)}}{(N\,c_d)^{p}} \sum_{k=1}^M
 \# \{ i \text{ s.t. } X_i \in C_k \}^p \,.\label{tech}
\end{align}
Let us introduce the notation $N_k :=\# \{ i \text{ s.t. } X_i \in
C_k \}$. $N_k$ is a random variable which follows a binomial law
$B(N,s_k)$ with $s_k := \int_{C_k} \rho^0(x)\,dx $. If $L\| \rho^0
\|_p \le \| \rho^0_{N} \|_p$, then \eqref{tech} together with
H\"older's inequality imply that
$$
\sum_{k=1}^M  N_k^p \ge \frac{(c_d N)^p}{M_d} \ep^{d
\frac{p}{p^{\prime}} } \| \rho^0_{N} \|_p^p \ge N^p \tilde{L}^p
\ep^{d \frac{p}{p^{\prime}} } \| \rho^0 \|_p^p \ge N^p \tilde{L}^p
\sum_{k=1}^M s_k^p ,
$$
where $\tilde{L}:=c_d L/(M_d)^{1/p}$. But, if this happens, it
means that for at least one $k \le M$,
\begin{align*}
\begin{aligned}
N_k &\ge \left(\frac12 M^{-1} (N \tilde{L})^p \ep^{d
\frac{p}{p^{\prime}}} \| \rho^0 \|_p^p + \frac12 N^p \tilde{L}^p
s_k^p \right)^{\frac1p} \cr &\ge \frac12 M^{-\frac1p} N
\ep^{\frac{d}{p^{\prime}}} \tilde{L}\| \rho^0 \|_p + \frac12 N
\tilde{L} s_k \ge \frac {N\tilde{L}}2 \left(  \tilde{c}_R \ep^d \|
\rho^0 \|_p +  s_k \right),
\end{aligned}
\end{align*}
with $\tilde{c}_R := 1/\left[ 2(R+1) \right]^{\frac dp}$, where
the concavity of $x^{1/p}$ was used. Then, we deduce that
$$
\PP \bigl( L\| \rho^0 \|_p \le  \| \rho^0_{N} \|_p \bigr) \le
\sum_{k=1}^M \PP \Bigl( N_k \ge \frac {N\tilde{L}}2 [ \tilde{c}_R
\ep^d \| \rho^0 \|_p +  s_k ] \Bigr)\,.
$$
Since $N_k$ is a random variable which follows a binomial law
$B(N,s_k)$, then for any $\lambda$, the exponential moments of
$N_k$ are bounded by
$$
\EE(e^{\lambda N_k}) \leq \left[1+(e^\lambda -1)s_k\right]^N\leq
e^{(e^{\lambda} -1) N s_k}\,.
$$
This together with Chebyshev's inequality implies that
\begin{align*}
\PP \bigl( L\| \rho^0 \|_p \le \| \rho^0_{N} \|_p \bigr) &\le
\sum_{k=1}^M \EE(e^{\lambda N_k})  e^{- \lambda \frac
{N\tilde{L}}2 [  \tilde{c}_R \ep^d \| \rho^0 \|_p
+  s_k ]}\\
& \leq  \sum_{k=1}^M  e^{(e^{\lambda} -1) N s_k- \lambda \frac
{N\tilde{L}}2 [  \tilde{c}_R \ep^d \| \rho^0 \|_p +  s_k ] }.
\end{align*}
Taking $\lambda  = \ln L'$ with the notation
$L'=\frac{\tilde{L}}{2}$, we get
\begin{align*}
\PP \bigl( L\| \rho^0 \|_p \le \| \rho^0_{N} \|_p \bigr) &\le
\sum_{k=1}^M e^{- (L' \ln L'  + 1 -L') N s_k- L' \ln L'
\tilde{c}_R
N \ep^d \| \rho^0 \|_p  }\\
& \le \sum_{k=1}^M e^{-L' \ln L' c_R N \ep^d \| \rho^0 \|_p   } =
M e^{-L' \ln L' \tilde{c}_R N \ep^d \| \rho^0 \|_p},
\end{align*}
where we used $x\ln x -x + 1 \geq 0$, for $x > 0$. With the
scaling $\ep(N) = N^{- \frac \gamma d}$, we get
\begin{align*}
\PP \bigl( L\| \rho^0 \|_p \le \| \rho^0_{N} \|_p \bigr) & \le
[2(R+1)]^d N^\gamma e^{- \tilde{c}_R L' \ln L'  \| \rho^0 \|_p
N^{1-\gamma} }.
\end{align*}
In particular, choosing $L = L_d = 4(M_d)^{\frac{1}{p}}/c_d$ so
that $L' = 2$, we get the desired result
\[
\PP \bigl( L_d\| \rho^0 \|_p \le \| \rho^0_{N} \|_p \bigr) \le
[2(R+1)]^d N^\gamma e^{- c_R \| \rho^0 \|_p N^{1-\gamma} }\,,
\]
for $1<p<\infty$. In the case of $p = \infty$, we first notice
that as in \eqref{tech}, we deduce
\[
\|\rho^0_N\|_{\infty} \leq \frac{M_d}{Nc_d\e^d}\sup_{1\leq k \leq
M} \# \{ i \text{ s.t. } |c_k - X_i|_{\infty} \leq   \e
\}=\frac{M_d}{Nc_d\e^d}\sup_{1\leq k \leq M} N_k.
\]
Since $N_k$ follows a binomial law $B(N,s_k)$ and $s_k \leq
\|\rho^0\|_{\infty}\e^d$, above estimates allow us to conclude the
desired inequality.
\end{proof}

We are now in a position to give the proof of propagation of
chaos.

\begin{proof}[Proof of Theorem~\ref{chaos-thm}]
We introduce several sets for the random initial data:
\[
\omega_1 := \{ X^{N,0} : \eta^0_m \geq \e^{r} \},\quad \omega_2 :=
\{ X^{N,0} : L_d\|\rho^0\|_p \geq \|\rho^0_{N}\|_p \},
\]
and
\[
\omega_3 := \{ X^{N,0} : d_1(\mu^0_{N},\rho^0) \leq \e \}\,,
\]
where $r$, $\varepsilon$ and $L_d$ are given in Propositions
\ref{prop-chao}, \ref{prop:mindist}, and \ref{prop:normLp}. We
first provide the estimate of $\bbp(\omega_1^c)$. Note that since
the assumption on $\gamma$, we obtain
\[
\frac{2p-1}{\gamma(p-1)} < \frac{d}{p'(1+\alpha)}.
\]
This yields the existence of $r$ verifying
\[
1<\frac2\gamma\leq\frac{2p-1}{\gamma(p-1)} < r <
\frac{d}{p'(1+\alpha)}.
\]
This again implies the existence of $\beta > 0$ satisfying
\[
\frac{d}{\gamma}\beta + \frac{2p-1}{\gamma(p-1)} < r.
\]
From Proposition \ref{prop:mindist}, if we choose $L =
N^{-\beta},~\e = N^{-\gamma/d}$, then
\begin{align*}
\begin{aligned}
\bbp(\omega_1^c) &= \bbp\left(X^{N,0} : \eta_m^0 \leq
\e^{r}\right) = \bbp\left(X^{N,0} : \eta_m^0 \leq N^{-\frac{\gamma
r}{d}}\right)  \cr &\leq \bbp \left( X^{N,0} :\eta_m^0 \le L N^{-
\frac{2p-1}{d(p-1)}} \right)\leq 1 - e^{-2c_d^{1/p'} \|\rho^0\|_p
L^{d/p'}} \cr & \leq 2c_d^{1/p'} \|\rho^0\|_p L^{d/p'} \leq
CN^{-s},
\end{aligned}
\end{align*}
for a sufficiently large $N$ such that $N \geq (2c_d^{1/p'}
\|\rho^0\|_p)^{\frac{p'}{p' + d\beta}}$, where $s =
\frac{d\beta}{p'}$. For the estimate of $\bbp(\omega_2^c)$, we use
the result of Proposition \ref{prop:normLp} to obtain
\[
\bbp(\omega_2^c) \leq CN^{\gamma}e^{-C N^{1-\gamma}}.
\]
Finally the estimate of $\bbp(\omega_3^c)$ follows from
\cite[Proposition 1.2 of Annexe A]{Boi2} (see also \cite{Boi,BGV}) that
\[
\bbp\left( X^{N,0} : d_1(\mu^0_N,\rho^0) \ge \e \right) \leq
CN^{-s'},
\]
where $C$ and $s'$ are positive constants. We now denote $\omega
:= \omega_1 \cap \omega_2 \cap \omega_3$. Then we have
\[
\bbp(\omega^c) \leq CN^{-l},
\]
for some positive constants $C$ and $l$. If the initial data
belongs to $\omega$, then we obtain from Proposition
\ref{prop-chao} that
\[
d_1(\rho_{N}(t),\mu_N(t)) \leq d_{\infty}(\rho_{N}(t),\mu_N(t))
\leq \frac{Ce^{CT}}{N^{\gamma/d}}, \quad \mbox{for} \quad t \in
[0,T].
\]
We also notice from Theorem \ref{BLRlike-thm} that
\[
d_1\left(\rho(t), \rho_{N}(t)\right) \leq
d_1\left(\rho^0,\rho^0_{N}\right)e^{CT}\leq
(d_1\left(\rho^0,\mu_N^0\right) +
d_{\infty}\left(\mu_N^0,\rho^0_{N}\right))e^{CT}\,,
\]
for all $t \in [0,T]$. Since
$d_{\infty}\left(\mu_N^0,\rho^0_{N}\right)\leq \varepsilon$ and
the initial data belongs to $\omega$, this yields
\[
d_1\left(\rho(t), \rho_{N}(t)\right) \leq
\frac{Ce^{CT}}{N^{\gamma/d}},
\]
for all $t \in [0,T]$ since
\[
d_1\left(\rho^0,\rho^0_{N,\e}\right) \leq
d_1\left(\rho^0,\mu_N^0\right) +
d_{\infty}\left(\mu_N^0,\rho^0_{N,\e}\right) \leq
\frac{Ce^{CT}}{N^{\gamma/d}}.
\]
Hence, we have
\[
\bbp(\omega) \leq \bbp \left( \sup_{t \in [0,T]} d_1\left(\rho(t),
\rho_{N}(t)\right) \leq \frac{Ce^{CT}}{N^{\gamma/d}}\right),
\]
and it implies the desired result
\[
\bbp \left( \sup_{t \in [0,T]} d_1\left(\rho(t),
\rho_{N}(t)\right) \geq \frac{Ce^{CT}}{N^{\gamma/d}}\right) \leq
\bbp(\omega^c) \leq \frac{C}{N^{l}}\,.
\]
\end{proof}

\par

%
%
%
%
%

\end{document}